\documentclass[11pt,a4paper]{article}                               
\usepackage[T1]{fontenc}                   
\usepackage[utf8]{inputenc}                 
\usepackage{graphicx}                      %
\usepackage{caption}        
\usepackage{mathrsfs}          %
\usepackage[top=.8in,bottom=1in,left=1in,right=1in]{geometry}
\usepackage{verbatim}
\usepackage{color}
\usepackage{picture}
\usepackage{amsmath,amsthm,amsfonts,amssymb}
\usepackage{pgf,tikz}
\usetikzlibrary{arrows}



\newtheorem{thm}{Theorem}[section]
\newtheorem{lem}[thm]{Lemma}

\newtheorem{prop}[thm]{Proposition}
\newtheorem{defn}{Definition}
\theoremstyle{definition}
\newtheorem{rem}[thm]{Remark}

\theoremstyle{remark}

\newcommand{\ds}{\displaystyle}

\newcommand{\abs}[1]{\left\vert#1\right\vert}

\newcommand{\R}{\mathbb{R}}

\newcommand{\W}{\mathcal W}
\newcommand{\de}{\partial}

\newcommand{\fhi}{\varphi}

\makeatletter 
\def\@makefnmark{} 
\makeatother 
\theoremstyle{plain}
\newtheorem*{thm*}{Theorem}

\title{Two estimates for the first Robin eigenvalue of the Finsler Laplacian with negative boundary parameter.} 
\author{
   Gloria Paoli%
\thanks{
Universit\`a degli studi di Napoli Federico II, Dipartimento di Matematica e Applicazioni ``R. Caccioppoli'', Via Cintia, Monte S. Angelo - 80126 Napoli, Italia. Email: gloria.paoli@unina.it}
{, }
Leonardo Trani%
\thanks{Universit\`a degli Studi di Napoli Federico II, Dipartimento di Matematica e Applicazioni ``R. Caccioppoli'', Via Cintia, Monte S. Angelo - 80126 Napoli, Italia. Email: leonardo.trani@unina.it}
}\date{}

\begin{document}

\maketitle
\begin{abstract}
\noindent We prove two bounds for the first Robin eigenvalue of the Finsler Laplacian with negative boundary parameter in the planar case. In the constant area problem, we show that the Wulff shape is the maximizer only for values which are close to 0 of the boundary parameter
and, in the fixed perimeter case, that  the Wulff shape maximizes the first eigenvalue for all values of the parameter.
\end{abstract}

\textsc{Keywords:} Eigenvalue optimization, Finsler Laplacian, Robin boundary condition, negative parameter, Wulff shape\\ 

\textsc{Mathematics Subject Classifications (2010):} 58J50, 35P15

\section{Introduction}
Let $\Omega$ be a bounded,  open subset of $\mathbb{R}^n$, $n\geq2$, with Lipschitz boundary; its Robin eigenvalues related to the Laplacian are the real numbers $\lambda$ such that   
\begin{equation}\label{rob}
\begin{cases}
-\Delta u=\lambda u &\mbox{in}\ \Omega\\[.2cm]
\frac{\de u}{\de \nu}+\alpha u=0&\mbox{on}\ \de\Omega
\end{cases}
\end{equation}
admits non trivial $W^{1,2}(\Omega)$ solutions. We denote  by $\partial u/\partial \nu$ the outer normal derivative to $u$ on $\partial\Omega$; $\alpha$ is an arbitrary  real constant, which will be refered to as boundary parameter of the Robin problem.  We observe that for $\alpha=0$ we obtain the Neumann problem and for $\alpha=\pm\infty$ we  formally obtain the Dirichlet problem.
For each fixed $\Omega $ and $\alpha$ there is a sequence of eigenvalues 
$$\lambda_1(\alpha,\Omega)\leq\lambda_2(\alpha,\Omega)\leq\dots\rightarrow+\infty $$
which depend on $\alpha$.
In particular, the first non trivial Robin eigenvalue of $\Omega$ is characterized by the expression
\begin{equation*}\label{var_char}
\lambda_1(\alpha , \Omega)=\min_{\substack{u\in W^{1,2}(\Omega) \\ u\neq 0}}\dfrac{\ds\int_{\Omega}\left\vert Du\right\vert^2\;dx+\alpha\ds\int_{\de\Omega}|u|^2\;d\mathcal{H}^1}{\ds\int_{\Omega}|u|^2\;dx}.
\end{equation*}

It can be proved that  this  infimum is achieved by a function $u_\alpha\in W^{1,2}(\Omega)$ and since $ \lambda_1(\alpha,\Omega)$ is simple, the corresponding eigenfunction can be chosen to be positive in $\Omega$.
We refer to \cite{k}  for a collection of the eigenvalue properties of the Robin Laplacian and the related proofs.

If we analyse the problem of minimizing the first eigenvalue of the Dirichlet problem under volume constraint, the Faber-Krahn inequality tells us that the unique solutions  are given by balls (see \cite{fa}). For the case of Neumann boundary conditions we can find analogous isoperimetric spectral inequalities in the works of Szeg\"{o} and Weinberger (\cite{sz}).

We consider now the Robin boundary conditions. If $\alpha$ is positive, we have that the  ball minimizes $\lambda_{1}(\alpha,\Omega)$  among all Lipschitz domains of given volume. This fact was proved by Bossel and Daners (\cite{d}) and generalized to the $p$-Laplacian by Dai and Fu in \cite{df}  and by Bucur and Daners in \cite{bd}; this result was also shown to hold on general open sets of finite measure by Bucur and Giacomin, see \cite{bu}. Moreover this inequality is sharp: if the first Robin eigenvalue of $\Omega$ is equal to the first eigenvalue of the ball then $\Omega$ is a ball up to a negligible set.

If $\alpha$ is negative and $\Omega\subset\mathbb{R}^n$, with $n>2$, is a bounded smooth domain, it is not true that 
\begin{equation}\label{dis}
	\lambda_1( \alpha , \Omega )\leq\lambda_1( \alpha , B ),
\end{equation}
where $B$ is a ball of the same volume as $\Omega$; a counterexample  is provided in \cite{fk}. The above fact is true within the class of Lipschitz sets which are close to a ball in a Hausdorff metric sense, see for instance \cite{fnt}.

On the other hand the spectral inequality \eqref{dis} holds in dimension $2$: in \cite{fk} is proved that for bounded planar domains of class $C^2$ and fixed  area  there exists a negative number $\alpha_*$, depending only on the area, such that \eqref{dis} holds for all $\alpha\in[\alpha_*,0]$.
This fact is proved by applying the method of parallel coordinates, introduced  by Payne and Weinberger in  \cite{pw}.

In the first part of this work we have found an analogous of inequality  \eqref{dis} in the anisotropic case. 
Let  $F$ be a Finsler norm, i.e. a convex positive $C^2$ function. We consider the anisotropic version of problem \eqref{rob}, that is 
\begin{equation*}
\begin{cases}
-{\rm div}\left(F(Du)F_{\xi}(Du)\right)=\lambda_F(\alpha , \Omega)u &\mbox{in}\ \Omega\\[.2cm]
\langle F(Du)F_{\xi}(Du),\nu_{\de\Omega}\rangle+\alpha F(\nu_{\de\Omega})u=0 &\mbox{on}\ \de\Omega.
\end{cases}
\end{equation*}
 We have the following variational characterization of the first eigenvalue:
 \begin{equation*}\label{var_char}
 \lambda_{1, F}( \alpha , \Omega )=\min_{\substack{u\in W^{1,2}(\Omega) \\ u\neq 0}}=\dfrac{\ds\int_{\Omega}F^2(Du)\;dx+\alpha\ds\int_{\de\Omega}|u|^2F(\nu_{\de\Omega})\;d\mathcal{H}^1}{\ds\int_{\Omega}|u|^2\;dx}.
 \end{equation*}
 This problem is studied for istance in \cite{dg, dg2, dpg, gt18}.
 Using the method of parallel coordinates, adapted to the anisotropic case, we prove the following theorem. 
 \begin{thm}\ For bounded planar domains of class $C^2$ and fixed area, there exists a negative number $\alpha_*$, depending only on the area, such that the following inequality  holds $\forall \alpha\in[\alpha_*,0]$:
 	$$\lambda_{1 ,F}( \alpha , \Omega )\leq\lambda_{1, F} (\alpha , \mathcal{W}^*_{\Omega}), $$
 	where $ \mathcal{W}^*_{\Omega}$ is the Wulff shape of the same area as $\Omega$.
 \end{thm}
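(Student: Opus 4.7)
The plan is to adapt to the Finsler setting the method of interior parallel coordinates used by Freitas and Krej\v{c}i\v{r}\'{i}k in \cite{fk} for the isotropic planar case: I construct an explicit test function $v$ on $\Omega$ from the first eigenfunction on the Wulff shape and use it in the variational characterization of $\lambda_{1,F}(\alpha,\Omega)$. Let $\tilde u$ denote the first positive eigenfunction on $\mathcal{W}^*_{\Omega}$. By the anisotropic symmetrization results of \cite{dg,dpg} and the simplicity of $\lambda_{1,F}(\alpha,\mathcal{W}^*_{\Omega})$, $\tilde u(x)=g(F^{\circ}(x))$ on $\mathcal{W}^*_{\Omega}=\set{F^{\circ}<R}$, with $g:[0,R]\to\R$ smooth and strictly decreasing; in particular $\tilde u\equiv g(R)$ on $\de\mathcal{W}^*_{\Omega}$. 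On $\Omega$ I introduce the anisotropic distance to the boundary $d^{\circ}(x):=\inf_{y\in\de\Omega}F^{\circ}(x-y)$, which satisfies $F(Dd^{\circ})=1$ a.e., and set $A(s):=|\set{d^{\circ}>s}|$ and $\kappa_{F}:=|\set{F^{\circ}\le 1}|$. Since $A(0)=|\Omega|=\kappa_{F}R^{2}$, the formula
\[
v(x):=g\!\left(\sqrt{A(d^{\circ}(x))/\kappa_{F}}\right)
\]
defines a test function that is equimeasurable with $\tilde u$ (so that $\int_{\Omega}v^{2}\,dx=\int_{\mathcal{W}^*_\Omega}\tilde u^{2}\,dx$ by Cavalieri) and constantly equal to $g(R)$ on $\de\Omega$.

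The boundary contribution is the easy half. The anisotropic (Wulff) isoperimetric inequality $P_{F}(\Omega)\ge P_{F}(\mathcal{W}^*_\Omega)$, combined with $v\equiv g(R)$ on $\de\Omega$ and $\alpha<0$, yields
\[
\alpha\int_{\de\Omega}v^{2}F(\nu_{\de\Omega})\,d\mathcal H^{1}=\alpha g(R)^{2}P_{F}(\Omega)\le \alpha g(R)^{2}P_{F}(\mathcal{W}^*_\Omega)=\alpha\int_{\de\mathcal{W}^*_\Omega}\tilde u^{2}F(\nu)\,d\mathcal H^{1},
\]
so that the Rayleigh numerator of $v$ enjoys a boundary-term gain $\mathcal G_\Omega:=|\alpha|g(R)^{2}(P_{F}(\Omega)-P_{F}(\mathcal{W}^*_\Omega))\ge 0$. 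The theorem will then follow from the variational characterization as soon as one controls the Dirichlet-energy surplus $\mathcal D_\Omega:=\int_\Omega F(Dv)^2\,dx-\int_{\mathcal{W}^*_\Omega}F(D\tilde u)^2\,dx$ by $\mathcal G_\Omega$.

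For the surplus, writing $v=h\circ d^{\circ}$ with $h(s):=g(\sqrt{A(s)/\kappa_F})$ and exploiting $F(Dd^{\circ})=1$, the anisotropic coarea formula followed by the change of variables $r=\sqrt{A(s)/\kappa_F}$ produces the explicit expressions
\[
\int_\Omega F(Dv)^2\,dx=\int_0^R g'(r)^2\,\frac{(-A'(s(r)))^2}{2\kappa_F r}\,dr,\qquad \int_{\mathcal{W}^*_\Omega}F(D\tilde u)^2\,dx=\int_0^R g'(r)^2\cdot 2\kappa_F r\,dr.
\]
Combining the coarea identity $-A'(s)=P_F(\set{d^{\circ}>s})$, valid for a.e.\ $s>0$, with the planar anisotropic isoperimetric inequality $P_F(E)^{2}\ge 4\kappa_F|E|$ shows $\mathcal D_\Omega\ge 0$ and expresses it as an integral of $g'(r)^2$ weighted by the Bonnesen-type anisotropic deficit along the inner parallel sets. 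The main obstacle is the absorption $\mathcal D_\Omega\le \mathcal G_\Omega$: both quantities vanish when $\Omega$ is a Wulff shape and both scale with the isoperimetric deficit $P_{F}(\Omega)-P_{F}(\mathcal{W}^*_\Omega)$, so the ratio $\mathcal D_\Omega/\mathcal G_\Omega$ is controlled by $\|g'\|_\infty^{2}/(|\alpha|g(R)^{2})$ through a quantitative Bonnesen-type estimate. Because $\tilde u$ tends in $C^{1}$ to the first Neumann eigenfunction (a constant) as $\alpha\to 0^-$, one has $\|g'\|_\infty\to 0$ while $g(R)$ stays bounded away from zero, and the absorption holds for every $\alpha\in[\alpha_*,0]$ with $\alpha_*=\alpha_*(|\Omega|)<0$ determined by this quantitative balance. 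The same mechanism also explains the restriction to $n=2$: in higher dimension the anisotropic deficit on inner parallel sets cannot be offset by the boundary gain, consistently with the counterexamples of \cite{fk}.
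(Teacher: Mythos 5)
Your construction of the transplanted test function $v=g\bigl(\sqrt{A(d^{\circ}(x))/\kappa_F}\bigr)$ and the coarea computations are correct (equimeasurability, the boundary term, and the two expressions for the Dirichlet energies all check out), but the heart of the proof --- the absorption $\mathcal D_\Omega\le\mathcal G_\Omega$ --- is not proved, and the heuristic you offer for it is false. It is not true that ``both quantities scale with the isoperimetric deficit'': $\mathcal G_\Omega=|\alpha|\,g(R)^2\bigl(P_F(\Omega)-P_F(\mathcal W^*_\Omega)\bigr)$ is \emph{linear} in the deficit, whereas $\mathcal D_\Omega=\int_0^R g'(r)^2\,\bigl(P_F(\Omega_{s(r)})^2-4\kappa A(s(r))\bigr)/(2\kappa r)\,dr$ is \emph{quadratic} in the perimeter of the inner parallel sets. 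Concretely, take $\Omega$ a smoothed thin rectangle of area $A_0$, width $w\to0$, so $P_F(\Omega)\sim c/w$: then $P_F(\Omega_s)\sim P_F(\Omega)$ for essentially all relevant $s$ and the change of variables sweeps the whole interval $(0,R)$, so $\mathcal D_\Omega\approx P_F(\Omega)^2\int_0^R g'(r)^2\,dr/(2\kappa r)\sim c'\alpha^2P_F(\Omega)^2$ (using $g'(r)=O(|\alpha| r)$ as $\alpha\to0^-$), while $\mathcal G_\Omega\sim|\alpha|\,g(R)^2P_F(\Omega)$. The inequality $\mathcal D_\Omega\le\mathcal G_\Omega$ then forces $|\alpha|\lesssim 1/P_F(\Omega)$, i.e.\ a threshold that degenerates as the perimeter grows at fixed area. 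So your route can at best produce an $\alpha_*$ depending on area \emph{and} perimeter, which is strictly weaker than the statement ($\alpha_*$ depending only on the area); the factor $\|g'\|_\infty^2/(|\alpha| g(R)^2)=O(|\alpha|)$ cannot beat an unbounded deficit uniformly. (A secondary gap: the ``quantitative Bonnesen-type estimate'' for inner parallel sets of general, possibly multiply connected, non-convex $C^2$ domains with respect to the distance to the \emph{full} boundary is asserted but never formulated, and is itself delicate.)

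This is precisely why the paper does not compare $\Omega$ with the Wulff shape directly. Its proof (following Freitas--Krej\v{c}i\v{r}\'{\i}k) uses parallel coordinates based only at the \emph{outer} boundary curve $\Gamma_0$, where the Steiner-type bound $L_F(t)\le L_0^F-2\kappa t$ gives $|R'(t)|\le1$, and yields, for \emph{every} $\alpha\le0$, the bound $\lambda_{1,F}(\alpha,\Omega)\le\mu(\alpha,A^F_{r_1,r_2})$, the first Neumann--Robin eigenvalue of the anisotropic annulus with the same area and outer radius $L_0^F/(2\kappa)$ (Proposition \ref{annulus}). The domain dependence is thus reduced to the single parameter $L_0^F$, and the remaining comparison ``annulus versus Wulff shape of the same area'' for $\alpha$ near $0$ is made \emph{uniform} in $L_0^F$ (both as $r_1\to0$ and as $r_2\to\infty$) via the asymptotics \eqref{asy1}--\eqref{asy2}, the concavity of $\alpha\mapsto\mu(\alpha,A^F_{r_1,r_2})$, the derivative formula \eqref{der_lam}, and the Bessel-function analysis of Step~4. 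If you want to complete your argument you would need to supply exactly this kind of uniform control, and the direct transplantation does not provide it.
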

 We recall that the Wulff shape centered at the point $x_0$ is defined as
 $$\mathcal W_r(x_0) = \{  \xi \in  \R^n \colon F^o(\xi-x_0)< r \}.$$
 In the second part of the work we generalize to the anisotropic case a result presented in \cite{afk}. Here the authors, using again the methods of parallel coordinates,  have proved that, if $\alpha<0$ and for bounded planar domains of class $C^2$, then
\begin{equation}\label{diss}
	\lambda_1( \alpha , \Omega)\leq\lambda_1(\alpha , \widetilde{B}),
	\end{equation}
where $\widetilde{B}$ is a disk with the same perimeter as $\Omega$.
We obtain the following result.
\begin{thm}
	Let $\alpha\leq 0$. For bounded planar domains of class $C^2$, we have
	$$ \lambda_{1,F}( \alpha , \Omega )\leq \lambda_{1,F}(\alpha , \widetilde{\mathcal{W}}_{\Omega} ),$$
	where $\widetilde{\mathcal{W}}_{\Omega}$ is the Wulff shape with the same perimeter as $\Omega$.
\end{thm}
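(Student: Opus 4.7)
The plan is to adapt the method of parallel coordinates to the anisotropic setting, in the spirit of the Euclidean argument in \cite{afk}. Let $\widetilde{R}$ be the anisotropic radius defined by $P_F(\widetilde{\mathcal{W}}_\Omega) = 2\kappa_0 \widetilde{R}$, with $\kappa_0 := |\mathcal{W}_1|$, and let $u_*$ be the first positive Robin eigenfunction on $\widetilde{\mathcal{W}}_\Omega$. By simplicity of $\lambda_{1,F}(\alpha,\widetilde{\mathcal{W}}_\Omega)$ and the anisotropic radial symmetry of the Wulff shape, $u_*$ has the form $u_*(x) = \phi(F^o(x-x_0))$ for some $\phi \colon [0,\widetilde{R}] \to \mathbb{R}$. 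Setting $\psi(s) = \phi(\widetilde{R}-s)$, one has $u_*(x) = \psi(d_{\widetilde{\mathcal{W}}_\Omega}^F(x))$, where $d_E^F(x) = \inf_{y\in\partial E} F^o(x-y)$ denotes the anisotropic distance to $\partial E$. As a test function on $\Omega$ I would use
\[
v(x) = \psi(d_\Omega^F(x)),
\]
which is well defined since an elementary anisotropic isoperimetric argument (any curve enclosing a Wulff shape $\mathcal{W}_{r_*}$ has anisotropic perimeter at least $2\kappa_0 r_*$) gives the inradius bound $r_* := \max_\Omega d_\Omega^F \le \widetilde{R}$.

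Using $F(\nabla d_\Omega^F) = 1$ a.e.\ together with the coarea formula, both integrals of the Rayleigh quotient of $v$ reduce to one-dimensional integrals against the weight $\mu_\Omega(s) := P_F(\Omega_s)$, with $\Omega_s = \{d_\Omega^F > s\}$. The boundary term equals $\psi(0)^2 P_F(\Omega)$, which coincides with the boundary term for $u_*$ by the perimeter constraint. The same reduction on $\widetilde{\mathcal{W}}_\Omega$ produces the explicit weight $\mu_{\widetilde{\mathcal{W}}}(s) = 2\kappa_0(\widetilde{R}-s)$. The key geometric input is the anisotropic Gauss--Bonnet theorem in the plane, which, combined with the first variation of the anisotropic perimeter of inner parallel sets, yields
\[
\mu_\Omega'(s) = -\int_{\partial \Omega_s} \kappa_F \, d\mathcal{H}^1 \le -2\kappa_0,
\]
and hence, using $\mu_\Omega(0) = P_F(\Omega) = \mu_{\widetilde{\mathcal{W}}}(0)$, the pointwise comparison $\mu_\Omega(s) \le \mu_{\widetilde{\mathcal{W}}}(s)$ on $[0,r_*]$.

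Writing $\lambda := \lambda_{1,F}(\alpha, \widetilde{\mathcal{W}}_\Omega)$, the variational characterisation reduces the desired inequality to the estimate $N-\lambda D \le 0$, where $N$ and $D$ are the numerator and denominator of the Rayleigh quotient of $v$. Integrating by parts in the one-dimensional reduction, and using the radial eigenvalue ODE $(r\phi')' = -\lambda r\phi$ on $(0,\widetilde{R})$ with $\phi'(0)=0$ and $\phi'(\widetilde{R}) = -\alpha\phi(\widetilde{R})$, the quantity $N-\lambda D$ collapses to
\[
-\int_0^{r_*} \psi(s)\psi'(s) \Bigl[\frac{\mu_\Omega(s)}{\widetilde{R}-s} + \mu_\Omega'(s)\Bigr] \, ds.
\]
Since $\alpha \le 0$ forces $\lambda \le 0$, the identity $r\phi'(r) = -\lambda \int_0^r s\phi(s)\,ds$ shows that $\phi$ is non-decreasing, so $\psi\psi' \le 0$. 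On the other hand $\mu_\Omega' \le -2\kappa_0$ and $\mu_\Omega(s) \le 2\kappa_0(\widetilde{R}-s)$ together give a non-positive bracket. Consequently the integrand is non-negative, $N - \lambda D \le 0$, and the theorem follows.

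The main obstacle I anticipate is the rigorous handling of the regularity of $d_\Omega^F$ and of $s \mapsto P_F(\Omega_s)$: one must control the cut locus where $d_\Omega^F$ fails to be differentiable and justify the anisotropic Gauss--Bonnet differential inequality when the parallel sets $\Omega_s$ lose connectedness or smoothness for intermediate values of $s$. These issues can be treated by approximation, following the analogous steps in the Euclidean argument of \cite{afk} with the anisotropic distance replacing the Euclidean one, and relying on the structural results for $d_\Omega^F$ available in the anisotropic literature cited in the introduction.
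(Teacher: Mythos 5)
Your overall strategy (transplant the radial profile of the Wulff-shape eigenfunction onto $\Omega$ via the anisotropic distance function, use $\lambda\le 0$ to get monotonicity of $\phi$, and reduce everything to a sign condition on a bracket involving $\mu_\Omega$ and $\mu_\Omega'$) is sound in spirit and close to the parallel-coordinate argument the paper uses; two small analytic points should still be fixed. First, the integration by parts does not make $N-\lambda D$ collapse exactly to the integral you wrote: there is an extra endpoint term $\mu_\Omega(r_*)\,\psi(r_*)\psi'(r_*)$, which luckily is non-positive (since $\psi'\le 0$) and so harmless, but it should appear. Second, the justification of the geometric input (differentiability of $s\mapsto P_F(\Omega_s)$ across the cut locus, corner contributions in the "anisotropic Gauss--Bonnet" step) is exactly where the work lies; you acknowledge this, and the paper sidesteps it by deriving $L_F(t)\le L_0^F-2\kappa t$ from the Steiner formula for interior parallels of the outer curve.

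The genuine gap is the multiply connected case, for which your key inequality is simply false. If $\Omega=\mathcal{W}_{R_2}\setminus\overline{\mathcal{W}}_{R_1}$ is an anisotropic annulus, then for small $s$ one has $\Omega_s=\mathcal{W}_{R_2-s}\setminus\overline{\mathcal{W}}_{R_1+s}$ and $\mu_\Omega(s)=2\kappa(R_1+R_2)$ is constant, so $\mu_\Omega'(s)=0>-2\kappa$ and your bracket $\mu_\Omega(s)/(\widetilde R-s)+\mu_\Omega'(s)$ is strictly positive: the pointwise sign argument breaks down. This is not a technicality: Gauss--Bonnet only yields total (anisotropic) curvature $2\kappa\,\chi(\Omega_s)$, and $\chi$ can be $\le 0$ when $\Omega$, hence $\Omega_s$, has holes, while the theorem is stated for all bounded $C^2$ planar domains. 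The paper avoids this by basing the parallel coordinates at the outer boundary $\Gamma_0$ only (so the relevant length is $L_0^F\le P_F(\Omega)$, and the inner boundary terms are discarded using $\alpha\le 0$), reducing to the Neumann--Robin annulus of Proposition \ref{annulus}, comparing it with $\mathcal{W}_{r_2}$, $r_2=L_0^F/(2\kappa)$, in Proposition \ref{ann<wulff_p}, and finally invoking the monotonicity of $R\mapsto\lambda_{1,F}(\alpha,\mathcal{W}_R)$ (Remark \ref{monotonicity}) to pass from perimeter $L_0^F$ to perimeter $P_F(\Omega)$. To repair your argument you must do something analogous: either run your construction with the distance from $\Gamma_0$ (where the simply connected topology makes the differential inequality for inner parallels available) and then upgrade the radius with the monotonicity result, or restrict your Gauss--Bonnet step to simply connected domains and treat domains with holes separately; as written, the proof does not cover them.
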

In conclusion, we recall that in \cite{bcnt} the authors prove that the inequality \eqref{diss} holds true in $\mathbb{R}^n$, if it is restricted to the class of convex sets, or more precisely 
to the class of Lipschitz sets  that can be written as $\Omega\setminus K$, with $\Omega$ open and convex and $K$ closed. Moreover in \cite{fl} the authors prove that the second eigenvalue of the Robin problem related to the Laplacian is maximal for the ball among domains of fixed volume.

The paper is organized as follows. In Section $2$, we recall some basic definitions and properties of the Finsler norm $F$; in Section $3$ we state the Robin problem with negative boundary parameter in the anisotropic case. The main results  are contained in Section  $4$ and $5$,: in the first one we obtain an isoperimetric estimates with a volume constraint and in the second one with a perimeter constraint, both in dimension $2$ and in the anisotropic case.

\section{Notation and preliminaries}
In the following we denote by $\langle\cdot,\cdot\rangle$ the standard euclidean scalar product in $\mathbb{R}^n$ and by $|\cdot|$ the euclidean norm in $\mathbb{R}^n$, for $n\geq 2$.
We denote with $\mathcal{L}^n$ the Lebesgue measure in $\mathbb{R}^n$ (sometimes denoted with $V(\cdot)$) and with $\mathcal{H}^k$, for $k\in [0,n]$, the $k-$dimensional Hausdorff measure in $\mathbb{R}^n$. If $\Omega\subseteq \mathbb{R}^n$, ${\rm Lip}(\de \Omega)$ (resp. ${\rm Lip}(\de \Omega; \mathbb{R}^n)$) is the class of all Lipschitz functions (resp. vector fields) defined on $\de\Omega$.
If $\Omega$ has Lipschitz boundary, for $\mathcal{H}^{n-1}-$ almost every $x\in\partial\Omega$, we denote by $\nu_{\de\Omega}(x)$ the outward unit euclidean normal to $\partial \Omega$ at $x$ and by $T_x(\de\Omega)$ the tangent hyperplane to $\de\Omega$ at $x$.

Let $F$ be  a convex, even, 1-homogeneous and non negative  function defined in $\R^{n}$. 
Then $F$ is a convex function such that
\begin{equation}
\label{eq:omo}
F(t\xi)=|t|F(\xi), \quad t\in \R,\,\xi \in  \R^{n}, 
\end{equation}
and such that
\begin{equation}
\label{eq:lin}
a|\xi| \le F(\xi),\quad \xi \in  \R^{n},
\end{equation}
for some constant $a>0$. The hypotheses on $F$ imply that there exists $b\ge a$ such that
\begin{equation*}
\label{upb}
F(\xi)\le b |\xi|,\quad \xi \in  \R^{n}.
\end{equation*}
Moreover, throughout the paper we will assume that $F\in C^{2}(\mathbb R^{n}\setminus \{0\})$, and
\begin{equation*}
\label{strong}
[F^{p}]_{\xi\xi}(\xi)\text{ is positive definite in } \R^{n}\setminus\{0\},
\end{equation*}
with $1<p<+\infty$. 
The polar function $F^o\colon \R^n \rightarrow [0,+\infty[$ 
of $F$ is defined as
\begin{equation*}
F^o(v)=\sup_{\xi \ne 0} \frac{\langle \xi, v\rangle}{F(\xi)}. 
\end{equation*}
It is easy to verify that also $F^o$ is a convex function
which satisfies properties \eqref{eq:omo} and
\eqref{eq:lin}. $F$ and $F^o$ are usually called Finsler norm. Furthermore, 
\begin{equation*}
F(v)=\sup_{\xi \ne 0} \frac{\langle \xi, v\rangle}{F^o(\xi)}.
\end{equation*}
The above property implies the following anisotropic version of the Cauchy Shwartz inequality
\begin{equation*}
\label{imp}
|\langle \xi, \eta\rangle| \le F(\xi) F^{o}(\eta), \qquad \forall \xi, \eta \in  \R^{n}.
\end{equation*}
We can then introduce the set
$$\mathcal W = \{  \xi \in  \R^n \colon F^o(\xi)< 1 \},$$
the so-called Wulff shape centered at the origin. We put
$\kappa_n=V(\mathcal W)$.  More generally, we denote by $\mathcal W_r(x_0)$
the set $r\mathcal W+x_0$, that is the Wulff shape centered at $x_0$
with measure $\kappa_nr^n$, and $\mathcal W_r(0)=\mathcal W_r$. In particular, when $\W$ is a subset of $\R^2$, we write $\abs{\W}=\kappa$.


We conclude this paragraph reporting the following properties of $F$ and $F^o$:
\begin{gather*}
\label{prima}
\langle \nabla_\xi F(\xi) , \xi \rangle= F(\xi), \quad  \langle\nabla_\xi F^{o} (\xi), \xi \rangle
= F^{o}(\xi),\qquad \forall \xi \in
\R^n\setminus \{0\}
\\
\label{seconda} F(  \nabla_\xi F^o(\xi))=F^o( \nabla_\xi F(\xi))=1,\quad \forall \xi \in
\R^n\setminus \{0\}, 
\\
\label{terza} 
F^o(\xi)   \nabla_\xi F( \nabla_\xi F^o(\xi) ) = F(\xi) 
\nabla_\xi F^o\left(  \nabla_\xi F(\xi) \right) = \xi\qquad \forall \xi \in
\R^n\setminus \{0\}. 
\end{gather*}


We  recall now some basic definitions and theorems concerning the perimeter in the Finsler norm.
\begin{defn}
	Let $\Omega$ be a bounded open subset of $\R^2$ with Lipschitz boundary, the anisotropic perimeter of $\Omega$ is defined as 
	\[
	P_F(\Omega)=\displaystyle \int_{\de \Omega}F(\nu_{\de \Omega}) \, d \mathcal H^{1}.
	\]
\end{defn}
Clearly, the anisotropic perimeter of  $\Omega$ is finite if and only if the usual Euclidean perimeter of $\Omega$, that we denote by $P(\Omega)$, is finite. Indeed, by the quoted properties of $F$, we obtain that
$$
aP(\Omega) \le P_F(\Omega) \le bP(\Omega).
$$
For example, if $\Omega=\mathcal{W}_R$, then 
$$P_F(\mathcal{W_R})=2\kappa R.$$
\noindent Moreover, an isoperimetric inequality is proved for the anisotropic perimeter, see for istance \cite{aflt,bu,dp,dpg,fm}.
\begin{thm*}
	Let $\Omega$ be a subset of $\mathbb{R}^2$ with finite perimeter. Then 
	\begin{equation}\label{anis_iso_inequality}
		P_F(\Omega)^2 \ge 4 \kappa V(\Omega )
	\end{equation}
	and the equality holds if and only if $\Omega$ is homothetic to a Wulff shape.
\end{thm*}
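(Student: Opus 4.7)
The plan is to reduce the anisotropic isoperimetric inequality to the classical Brunn--Minkowski inequality by relating the Finsler perimeter to the Minkowski content of $\Omega$ with respect to the Wulff shape. The advantage of this strategy is that it works in every dimension and handles both the inequality and its equality case in one stroke.

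First, I would establish the Minkowski-content representation
\begin{equation*}
P_F(\Omega) \;=\; \lim_{t \to 0^+} \frac{V(\Omega + t \mathcal W) - V(\Omega)}{t},
\end{equation*}
where $\Omega + t\mathcal W$ denotes the Minkowski sum. For Lipschitz $\Omega$ this follows from the identity $\Omega + t\mathcal W = \{x \in \R^2 : \dist_{F^o}(x,\Omega) \le t\}$ combined with the coarea formula applied to the $F^o$-distance. The key point is that the normal velocity of the boundary of $\Omega + t\mathcal W$ at a regular boundary point $x \in \partial\Omega$ equals the support function of $\mathcal W$ in the direction $\nu_{\partial\Omega}(x)$, and by the duality between $F$ and $F^o$ recalled in the preliminaries this support function is exactly $F(\nu_{\partial\Omega}(x))$. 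Consequently, the first-order term in $V(\Omega + t\mathcal W) - V(\Omega)$ is $t\int_{\partial\Omega} F(\nu_{\partial\Omega})\,d\mathcal H^{1}=t\,P_F(\Omega)$. For arbitrary sets of finite perimeter one approximates by smooth domains and invokes lower semicontinuity of $P_F$.

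Second, I would apply the Brunn--Minkowski inequality in $\R^2$ to $\Omega$ and $t\mathcal W$,
\begin{equation*}
V(\Omega + t\mathcal W)^{1/2} \;\ge\; V(\Omega)^{1/2} + V(t\mathcal W)^{1/2} \;=\; V(\Omega)^{1/2} + t\sqrt{\kappa}.
\end{equation*}
Squaring, subtracting $V(\Omega)$, and dividing by $t$ gives
\begin{equation*}
\frac{V(\Omega + t\mathcal W) - V(\Omega)}{t} \;\ge\; 2\sqrt{\kappa\, V(\Omega)} + t\,\kappa.
\end{equation*}
Letting $t \to 0^+$ and using the first step yields $P_F(\Omega) \ge 2\sqrt{\kappa V(\Omega)}$, which is exactly \eqref{anis_iso_inequality}.

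For the equality case, saturation of the above chain as $t \to 0^+$ forces equality in Brunn--Minkowski; by the rigidity of Brunn--Minkowski in $\R^2$, $\Omega$ and $\mathcal W$ must then be homothetic up to translation and a negligible set, which is the desired characterization. The main obstacle is the rigorous Minkowski-content identity at the generality of sets of finite perimeter, which requires structure results of De Giorgi--Federer type; at the level of Lipschitz or $C^2$ domains that are the focus of the paper, however, this step reduces to a short and transparent computation.
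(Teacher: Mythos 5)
The paper itself offers no proof of this statement: it is quoted as the classical anisotropic isoperimetric (Wulff) inequality, with references to Alvino--Ferone--Lions--Trombetti, Dacorogna--Pfister and Fonseca--M\"uller, so there is no internal argument to compare yours against. Judged on its own merits, your Brunn--Minkowski strategy is indeed the standard route to the \emph{inequality} for smooth or convex sets, but as written it has two genuine gaps. First, the reduction to general sets of finite perimeter does not work as stated: the anisotropic Minkowski content $\lim_{t\to0^+}\bigl(V(\Omega+t\mathcal W)-V(\Omega)\bigr)/t$ can be strictly larger than $P_F(\Omega)$ for rough sets (the identification holds for Lipschitz or otherwise regular boundaries only), and invoking \emph{lower} semicontinuity of $P_F$ transfers the bound in the wrong direction: for smooth approximations $\Omega_j\to\Omega$, lower semicontinuity gives $P_F(\Omega)\le\liminf_j P_F(\Omega_j)$, which cannot be combined with $P_F(\Omega_j)\ge 2\sqrt{\kappa V(\Omega_j)}$ to conclude anything about $\Omega$. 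What you actually need is an approximation with $P_F(\Omega_j)\to P_F(\Omega)$ (for instance mollification of $\chi_\Omega$, superlevel sets via Sard, and Reshetnyak continuity); such approximations exist, but that is a different statement from lower semicontinuity and should be said explicitly.

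Second, the equality case is not delivered by your argument. Equality $P_F(\Omega)^2=4\kappa V(\Omega)$ only says that the limit as $t\to0^+$ of a one-parameter family of inequalities is saturated; it does not force equality in Brunn--Minkowski at any fixed $t>0$, so the rigidity theorem cannot be invoked directly. For convex $\Omega$ the step can be repaired using the Steiner formula \eqref{steiner_1}: there $V(\Omega+t\mathcal W)=V(\Omega)+tP_F(\Omega)+\kappa t^2$, so $P_F(\Omega)=2\sqrt{\kappa V(\Omega)}$ yields equality in Brunn--Minkowski for every $t$, hence homothety to the Wulff shape. For arbitrary sets of finite perimeter, however, the characterization of equality is exactly the difficult part of the Wulff theorem (this is the content of the Fonseca--M\"uller reference in the bibliography), and it requires either that machinery or a genuine reduction to the convex case; your one-line appeal to ``saturation of the chain'' does not supply it.
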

Moreover, if $K$ is a bounded convex subset of $\mathbb{R}^2$, and $\delta>0$, the following Steiner formulas hold (see \cite{a,s}):
\begin{equation}\label{steiner_1}
	V(K+\delta\mathcal{W})=V(K)+P_F(K)\delta+\kappa\delta^2;
\end{equation}
\begin{equation}\label{steiner_2}
	P_F(K+\delta\mathcal{W})=P_F(K)+2\kappa\delta.
\end{equation}
Let $\Omega $ be a bounded open set of $\mathbb{R}^2$, the anisotropic distance of a point $x\in\Omega$ to the boundary $\de\Omega$ is defined as
$$ d_F(x,\de\Omega)=\inf_{y\in\de\Omega}F^o(x-y).$$ 
By the properties of the Finsler norm $F$, the distance function satisfies
\begin{equation}
	F(Dd_F(x))=1\quad \mbox{a.e. in }\Omega
\end{equation}
For the properties of the anisotropic distance function we refer, for istance, to \cite{cm}.
We can define also the anisotropic inradius of $\Omega$ as 
$$r_F(\Omega)=\sup\{d_F(x,\de\Omega),\; x\in\Omega\} .$$

\noindent We denote by 
$$\tilde{\Omega }_t=\{x\in\Omega\;|\;d_F(x,\de\Omega)>t \},$$
with $t\in[0,r_F(\Omega)]$. The general Brunn-Minkowski theorem (see \cite{s}) and the concavity of the anisotropic distance function give that the function $P_F(\tilde{\Omega}_t)$ is concave in $[0,r_F(\Omega)]$, hence it is decreasing and absolutely continous. In \cite{dpg} the following result is stated.
\begin{lem}\label{der}
	For almost every $t\in(0,r_F(\Omega))$,
	$$-\dfrac{d}{dt}V\left(\tilde{\Omega}_t\right)=P_F(\tilde{\Omega}_t). $$
	
\end{lem}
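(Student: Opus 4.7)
The plan is to obtain the formula as a direct application of the coarea formula for Lipschitz maps to the anisotropic distance function $u(x)=d_F(x,\de\Omega)$, exploiting the eikonal identity $F(Du)=1$ a.e.\ in $\Omega$.

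First I note that, by the two-sided bound $a|\xi|\le F(\xi)\le b|\xi|$, the function $u$ is Lipschitz on $\Omega$ and $|Du|\ge 1/b$ a.e., so $1/|Du|\in L^{\infty}(\Omega)$. Applying the classical coarea formula to $u$ with test integrand $1/|Du|$ gives, for every $t\in(0,r_F(\Omega))$,
$$V(\tilde\Omega_t)=\int_{\{u>t\}}\!dx=\int_{\{u>t\}}\frac{|Du|}{|Du|}\,dx=\int_t^{r_F(\Omega)}\!\!\left(\int_{\{u=s\}}\frac{d\mathcal H^1}{|Du|}\right)ds.$$
Since $t\mapsto V(\tilde\Omega_t)$ is monotone and absolutely continuous, Lebesgue differentiation yields
$$-\frac{d}{dt}V(\tilde\Omega_t)=\int_{\{u=t\}}\frac{d\mathcal H^1}{|Du|}\qquad\text{for a.e. }t\in(0,r_F(\Omega)).$$

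Next I identify this integrand with the anisotropic area element on $\de\tilde\Omega_t$. For a.e.\ $t\in(0,r_F(\Omega))$ the set $\tilde\Omega_t$ has Lipschitz boundary coinciding with $\{u=t\}$ up to $\mathcal H^1$-null sets, and at $\mathcal H^1$-a.e.\ such point the outer unit Euclidean normal of $\tilde\Omega_t$ is $\nu_{\de\tilde\Omega_t}=-Du/|Du|$. Then the evenness and $1$-homogeneity of $F$, combined with $F(Du)=1$, give
$$F(\nu_{\de\tilde\Omega_t})=F\!\left(-\frac{Du}{|Du|}\right)=\frac{F(Du)}{|Du|}=\frac{1}{|Du|}\quad\mathcal H^1\text{-a.e. on }\{u=t\}.$$
Plugging this identity back into the previous formula yields
$$-\frac{d}{dt}V(\tilde\Omega_t)=\int_{\de\tilde\Omega_t}F(\nu_{\de\tilde\Omega_t})\,d\mathcal H^1=P_F(\tilde\Omega_t),$$
which is exactly the claimed equality.

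The main obstacle is the rigorous identification $\de\tilde\Omega_t=\{u=t\}$ (mod $\mathcal H^1$-null sets) together with the Lipschitz regularity of $\tilde\Omega_t$, which is needed to write $P_F(\tilde\Omega_t)$ as the boundary integral $\int_{\de\tilde\Omega_t}F(\nu)\,d\mathcal H^1$. Here I would use the $C^2$ assumption on $\de\Omega$ together with the regularity results for the anisotropic distance collected in \cite{cm}, which guarantee that $u$ is smooth in a one-sided tubular neighborhood of $\de\Omega$; for the remaining range of $t$ a Sard-type argument for the Lipschitz function $u$, combined with the concavity of $t\mapsto P_F(\tilde\Omega_t)$ recalled just above the statement, controls the set of exceptional levels and produces the desired a.e.\ identity.
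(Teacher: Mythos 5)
Your argument is essentially the standard proof of this lemma: the paper itself does not prove it but quotes it from \cite{dpg}, where the statement is obtained precisely by applying the coarea formula to the anisotropic distance function together with the eikonal identity $F(Dd_F)=1$ and the homogeneity relation $F\bigl(-Du/|Du|\bigr)=1/|Du|$, exactly as you do; so the computational core of your proposal is correct. The only part I would tighten is the closing paragraph, where you handle the identification of $\{u=t\}$ with $\partial\tilde\Omega_t$: a Sard-type argument is not available for the merely Lipschitz function $u=d_F(\cdot,\partial\Omega)$ (it is not $C^1$ away from a tubular neighbourhood of $\partial\Omega$), and the concavity of $t\mapsto P_F(\tilde\Omega_t)$ invoked in the paper rests on Brunn--Minkowski and really requires convexity of $\Omega$, so it should not be used for a general $C^2$ domain. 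A cleaner route, which avoids any regularity claim on $\tilde\Omega_t$, is to interpret $P_F(\tilde\Omega_t)$ in the finite-perimeter sense and combine the two coarea formulas: since $|Du|\ge 1/b>0$ a.e., the Lipschitz coarea formula gives $\int_\Omega|Du|\,dx=\int \mathcal{H}^1(\{u=t\})\,dt$, while the BV coarea formula gives $\int_\Omega|Du|\,dx=\int \mathcal{H}^1(\partial^*\{u>t\})\,dt$; as $\partial^*\{u>t\}\subset\{u=t\}$ for every $t$ (by continuity of $u$), the two level sets coincide up to $\mathcal{H}^1$-null sets for a.e.\ $t$, the generalized outer normal there is $-Du/|Du|$, and since the weight $1/|Du|\le b$ is bounded your identity $-\frac{d}{dt}V(\tilde\Omega_t)=\int_{\partial^*\tilde\Omega_t}F(\nu)\,d\mathcal{H}^1=P_F(\tilde\Omega_t)$ follows for a.e.\ $t$ without any appeal to smoothness of the inner parallel sets.
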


\section{The Robin problem in the anisotropic case}
Let $\Omega$ be a bounded subset of $\mathbb{R}^2$ of class $C^2$.
We consider the anisotropic eigenvalue problem with Robin boundary conditions.

\noindent We fix a negative number $\alpha$ and we study the following problem:
\begin{equation}\label{min_problem}
\lambda_{1,F}(\alpha , \Omega)=\min_{\substack{u\in W^{1,2}(\Omega) \\ u\neq 0}} J(u),
\end{equation}
where 
\begin{equation}\label{var_char}
J(u)=\dfrac{\ds\int_{\Omega}\left(F(Du)\right)^2\;dx+\alpha\ds\int_{\de\Omega}|u|^2F(\nu_{\de\Omega})\;d\mathcal{H}^1}{\ds\int_{\Omega}|u|^2\;dx},
\end{equation}
and $\nu_{\de \Omega}$ is the outer normal to $\de\Omega$.  Using a constant as test function, we obtain the following inequality
\begin{equation}\label{in.lam}
\lambda_{1,F}(\alpha , \Omega)\le \alpha\frac{P_F(\Omega)}{\abs{\Omega}}\le 0.
\end{equation}
The minimizers $u$ of problem \eqref{min_problem} satisfy the following eigenvalue
\begin{equation*}
\begin{cases}
-{\rm div}\left(F(Du)F_{\xi}(Du)\right)=\lambda_{1,F}(\alpha , \Omega)u &\mbox{in}\ \Omega\\[.2cm]
\langle F(Du)F_{\xi}(Du),\nu_{\de\Omega}\rangle+\alpha F(\nu_{\de\Omega})u=0 &\mbox{on}\ \de\Omega
\end{cases}
\end{equation*}
that is, in the weak sense
\begin{equation}\label{wf}
\ds\int_{\Omega} F(Du)\left\langle D_\xi F(Du),D\fhi\right\rangle\; dx + \alpha\ds\int_{\de\Omega} u\fhi F(\nu_{\de\Omega})\; d\mathcal{H}^1=\lambda_{1,F}(\alpha , \Omega)\ds\int_{\Omega}u\fhi\; dx,
\end{equation}
for all $\fhi \in W^{1,2}(\Omega)$.
The following proposition is proved in \cite{dg2}.
\begin{prop}
There exist a function $u\in C^{1,\alpha}(\Omega)\cap C(\bar{\Omega})$ which realizes the minimum in \eqref{min_problem}and satisfies the anisotropic Robin problem. Moreover, $\lambda_{1,F}(\alpha,\Omega)$ is the first eigenvalue of the Robin problem and the first eigenfunctions are positive (or negative) in $\Omega$.
\end{prop}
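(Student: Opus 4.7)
The plan is to apply the direct method of the calculus of variations. The main delicate point is that, since $\alpha<0$, the boundary term in \eqref{var_char} is negative, so one has to show that $J$ is bounded below and that minimizing sequences are bounded in $W^{1,2}(\Omega)$. I would combine the standard trace inequality
\begin{equation*}
\ds\int_{\de\Omega} u^2\,d\mathcal H^1 \le \eps \ds\int_\Omega |Du|^2\,dx + C(\eps)\ds\int_\Omega u^2\,dx
\end{equation*}
with the two-sided estimate $a|\xi|\le F(\xi)\le b|\xi|$ to get, for $\eps$ small enough,
\begin{equation*}
\ds\int_\Omega F(Du)^2\,dx + \alpha\ds\int_{\de\Omega} u^2 F(\nu_{\de\Omega})\,d\mathcal H^1 \ge \tfrac{a^2}{2}\ds\int_\Omega |Du|^2\,dx - b|\alpha|C(\eps)\ds\int_\Omega u^2\,dx.
\end{equation*}
Dividing by $\int_\Omega u^2\,dx$ yields $J(u)\ge -b|\alpha|C(\eps)$, so the infimum is finite; moreover, on a minimizing sequence $\{u_n\}$ normalized by $\int_\Omega u_n^2\,dx=1$, the above inequality produces a uniform bound on $\int_\Omega |Du_n|^2\,dx$, hence on $\norm{u_n}_{W^{1,2}(\Omega)}$.

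Next I would extract a subsequence with $u_n\rightharpoonup u$ weakly in $W^{1,2}(\Omega)$, $u_n\to u$ strongly in $L^2(\Omega)$ and in $L^2(\de\Omega)$ (by compactness of the trace operator, using that $\de\Omega$ is $C^2$). Since $\xi\mapsto F(\xi)^2$ is convex, as a consequence of the positive-definiteness of $[F^2]_{\xi\xi}$ stated in the preliminaries, the functional $u\mapsto\int_\Omega F(Du)^2\,dx$ is weakly lower semicontinuous, so
\begin{equation*}
J(u)\le \liminf_{n\to\infty} J(u_n)=\lambda_{1,F}(\alpha,\Omega),
\end{equation*}
and $u$ realises the minimum. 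The Euler--Lagrange identity $\frac{d}{dt}J(u+t\fhi)|_{t=0}=0$ for every $\fhi\in W^{1,2}(\Omega)$, together with the 1-homogeneity $\langle F_\xi(\xi),\xi\rangle=F(\xi)$, reduces precisely to \eqref{wf}; hence $u$ is a weak solution of the anisotropic Robin problem with eigenvalue $\lambda_{1,F}(\alpha,\Omega)$.

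Finally, I would address regularity, sign and simplicity. Evenness of $F$ implies $F(D|u|)=F(Du)$ almost everywhere, so $J(|u|)=J(u)$ and we may assume $u\ge 0$. Local regularity for the associated anisotropic Euler--Lagrange equation, whose linearised principal part is uniformly elliptic away from $\{Du=0\}$ by the positive-definiteness of $[F^2]_{\xi\xi}$, gives $u\in C^{1,\gamma}_{\mathrm{loc}}(\Omega)$; the $C^2$ regularity of $\de\Omega$ together with the Robin condition yields continuity up to the boundary, hence $u\in C(\bar\Omega)$. The anisotropic strong maximum principle then forces $u>0$ inside $\Omega$. For simplicity of $\lambda_{1,F}(\alpha,\Omega)$, any other eigenfunction $v$ associated with the same eigenvalue is again a minimizer; by the same argument $v$ has constant sign, and two non-negative eigenfunctions cannot be $L^2$-orthogonal unless they are proportional. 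This also shows that the number defined by \eqref{min_problem} is genuinely the smallest Robin eigenvalue. The hardest technical step, which I would import from \cite{dg2}, is the boundary $C^1$ regularity and the Hopf-type principle in the anisotropic Robin setting.
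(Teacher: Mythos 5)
The paper does not actually prove this proposition: it simply defers to \cite{dg2}, so there is nothing internal to compare against line by line. Your direct-method outline is essentially the standard argument behind that citation, and it is sound in its main steps: the coercivity estimate via the $\eps$-trace inequality is exactly the point that must be added for $\alpha<0$ (the case treated in \cite{dg2} has a positive boundary parameter, where the functional is trivially bounded below), and your chain weak $W^{1,2}$ compactness + compact trace + weak lower semicontinuity of $u\mapsto\int_\Omega F(Du)^2\,dx$ (convexity of $F^2$ already follows from convexity and nonnegativity of $F$; the positive-definiteness of $[F^2]_{\xi\xi}$ is only needed later for ellipticity/regularity) correctly produces a minimizer, which satisfies \eqref{wf} by differentiating the quotient. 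Deferring interior $C^{1,\gamma}$ regularity, continuity up to the boundary and the strong maximum principle to \cite{dg2} is legitimate, since that is precisely what the paper itself does.

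One step in your last paragraph is not correct as stated: the argument that ``two non-negative eigenfunctions cannot be $L^2$-orthogonal unless they are proportional'' is a linear-theory fact, while the operator $u\mapsto\dive\bigl(F(Du)F_\xi(Du)\bigr)$ is nonlinear unless $F^2$ is a quadratic form, so eigenfunctions need not satisfy any orthogonality relations and simplicity requires a genuinely different argument (e.g.\ a Picone-type or hidden-convexity argument, as for the $p$-Laplacian). Fortunately the proposition does not claim simplicity, and the assertion that $\lambda_{1,F}(\alpha,\Omega)$ defined by \eqref{min_problem} is the smallest eigenvalue does not need your sign-plus-orthogonality detour: if $v$ is any eigenfunction with eigenvalue $\lambda$, taking $\fhi=v$ in \eqref{wf} gives $\lambda=J(v)\ge\lambda_{1,F}(\alpha,\Omega)$ directly. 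With that correction (and keeping your maximum-principle argument for the constant sign of first eigenfunctions, which is fine), your proof covers everything the statement claims.
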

\section{Isoperimetric estimates with a volume constraint}

In the following we are fixing a Finsler norm $F$. 

\begin{thm}\label{iso_vol} For bounded planar domains of class $C^2$ and fixed area, there exists a negative number $\alpha_*$, depending only on the area, such that the following inequality  holds $\forall \alpha\in[\alpha_*,0]$:
	$$\lambda_{1,F}(\alpha , \Omega)\leq\lambda_{1,F}(\alpha , \mathcal{W}^*_{\Omega}), $$
	where $\mathcal{W}^*_{\Omega}$ is the Wulff shape of the same area as $\Omega$. 
\end{thm}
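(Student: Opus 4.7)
The plan is to adapt the planar method of parallel coordinates (Payne--Weinberger \cite{pw}, as used by Freitas and Krejčiřík \cite{fk} in the Euclidean case) to the Finsler setting, by building a suitable test function $\phi$ on $\Omega$ out of the first eigenfunction on the Wulff shape $\mathcal{W}^*_\Omega$ and inserting it in the Rayleigh quotient defining $\lambda_{1,F}(\alpha,\Omega)$. By the anisotropic radial symmetry of the eigenvalue problem on a Wulff shape, the first eigenfunction $w$ on $\mathcal{W}^*_\Omega$ depends only on $d_F(\cdot,\partial\mathcal{W}^*_\Omega)$, namely $w(y)=f(d_F(y,\partial\mathcal{W}^*_\Omega))$ with $f:[0,R^*]\to(0,\infty)$ positive and decreasing (since $\alpha<0$ forces the maximum of the eigenfunction onto the boundary), where $R^*=\sqrt{|\Omega|/\kappa}$. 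We then set
\begin{equation*}
\phi(x):=f\bigl(\sigma(d_F(x,\partial\Omega))\bigr),\qquad x\in\Omega,
\end{equation*}
where $\sigma:[0,r_F(\Omega)]\to[0,R^*]$ is the strictly increasing bijection implicitly defined by matching volumes of parallel sets, $V(\tilde\Omega_t)=\kappa(R^*-\sigma(t))^2$; differentiating and using Lemma \ref{der} yields $\sigma'(t)=P_F(\tilde\Omega_t)/(2\kappa(R^*-\sigma(t)))$, which is $\geq 1$ by the anisotropic isoperimetric inequality \eqref{anis_iso_inequality} applied to $\tilde\Omega_t$.

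Using $F(Dd_F)=1$ a.e., the anisotropic co-area formula, and Lemma \ref{der}, the three pieces of $J(\phi)$ reduce to one-dimensional integrals in the variable $\tau=\sigma(t)$. A direct calculation shows that, with this choice of $\sigma$, the $L^2$ term $\int_{\Omega}\phi^2\,dx$ coincides exactly with $\int_{\mathcal{W}^*_\Omega} w^2\,dy$. The boundary term $\alpha f(0)^2 P_F(\Omega)$ is \emph{smaller} than its Wulff analogue $\alpha f(0)^2 P_F(\mathcal{W}^*_\Omega)$, since $\alpha<0$ and $P_F(\Omega)\geq P_F(\mathcal{W}^*_\Omega)$ by \eqref{anis_iso_inequality}; this contribution favors the desired inequality. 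The gradient term, on the contrary, is \emph{larger} than its Wulff counterpart: indeed, $\sigma'(t)P_F(\tilde\Omega_t)=P_F(\tilde\Omega_t)^2/(2\kappa(R^*-\sigma(t)))\geq 2\kappa(R^*-\sigma(t))$ by \eqref{anis_iso_inequality}, producing an unfavourable contribution whose magnitude is controlled by the isoperimetric deficits of the parallel sets $\tilde\Omega_t$.

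The main obstacle is to balance these two opposing contributions, and this is precisely where the smallness of $\alpha$ enters. The favourable (boundary) gain is of order $|\alpha|\,(P_F(\Omega)-P_F(\mathcal{W}^*_\Omega))$, while the unfavourable (gradient) loss is of order $(f')^2$ integrated against the isoperimetric excess. The Robin boundary condition satisfied by the profile gives $|f'(0)|=|\alpha| f(0)$, so that $f$ is $O(|\alpha|)$-close to a constant as $\alpha\to 0^-$; consequently the gradient loss is of order $\alpha^2$. Hence for $|\alpha|$ smaller than a threshold $\alpha_*$ depending only on $|\Omega|$, the linear-in-$|\alpha|$ boundary gain dominates the quadratic-in-$|\alpha|$ gradient loss, yielding $J(\phi)\leq \lambda_{1,F}(\alpha,\mathcal{W}^*_\Omega)$ and hence the theorem. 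Making this threshold uniform over all $C^2$-domains of area $|\Omega|$---in particular, controlling the ratio of gradient excess to perimeter deficit independently of the shape of $\Omega$---is the technical heart of the argument.
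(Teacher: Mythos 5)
Your transplantation is internally consistent up to the final balancing step: with $\sigma$ defined by volume matching one indeed gets $\int_\Omega \phi^2\,dx=\int_{\mathcal{W}^*_\Omega}w^2\,dy$, the boundary term gains $\alpha f(0)^2\left(P_F(\Omega)-P_F(\mathcal{W}^*_\Omega)\right)\le 0$, and the gradient term exceeds its Wulff counterpart by
\[
\int_0^{R^*} f'(\tau)^2\,\frac{P_F(\tilde\Omega_{t(\tau)})^2-4\kappa^2(R^*-\tau)^2}{2\kappa(R^*-\tau)}\,d\tau .
\]
The gap is precisely the step you label ``the technical heart'' and do not carry out: with this test function the required balance is \emph{not} uniform over domains of fixed area, so the argument as written cannot produce a threshold $\alpha_*$ depending only on the area. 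Take a thin rectangle of sides $\ell\times w$ with $\ell w=A_0$ and $\ell$ large (Euclidean norm for simplicity). Then $P_F(\tilde\Omega_t)\approx 2\ell$ for essentially all $t$ up to the inradius, while for small $|\alpha|$ the profile satisfies $f'(\tau)^2\approx \alpha^2(R^*-\tau)^2/(R^*)^2$, so the excess above is of order $\alpha^2\ell^2$, whereas the boundary gain is only of order $|\alpha|\,\ell$. Hence $J(\phi)\le\lambda_{1,F}(\alpha,\mathcal{W}^*_\Omega)$ is certified only for $|\alpha|\lesssim 1/\ell$: the threshold degenerates with the perimeter of $\Omega$, and this is not a lossy estimate but the actual size of the two competing terms, so in the elongated regime your test function simply cannot prove the inequality.

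The paper avoids exactly this by not mapping $\Omega$ directly onto the Wulff shape. Its change of variables $R(t)=\sqrt{(L_0^F)^2-4\kappa A_F(t)}/(2\kappa)$ is calibrated on the \emph{outer perimeter}, which gives $|R'|\le 1$ (no gradient loss at all) and an exactly matched boundary term, so that $\lambda_{1,F}(\alpha,\Omega)\le\mu(\alpha,A^F_{r_1,r_2})$ holds for every $\alpha\le 0$, where $A^F_{r_1,r_2}$ is a Neumann--Robin anisotropic annulus of the same area with $r_2=L_0^F/(2\kappa)$. The smallness of $\alpha$ enters only in the one-dimensional comparison $\mu(\alpha,A^F_{r_1,r_2})\le\lambda_{1,F}(\alpha,\mathcal{W}_{r_3})$, and the uniformity of the threshold as $r_1\to 0$ or $r_2\to\infty$ (your problematic regime of large isoperimetric deficit) is obtained not by a test-function estimate but by analyticity and concavity in $\alpha$ of the eigenvalue curves, the derivative formula $\frac{d}{d\alpha}\lambda_{1,F}(\alpha,\Omega)=\int_{\partial\Omega}u_\alpha^2F(\nu_{\partial\Omega})\,d\mathcal{H}^1$, and an analysis of the Bessel-type secular equations for the annulus and the Wulff shape. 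To rescue your route you would need an analogous eigenvalue-curve (or otherwise shape-independent) argument in the regime where $P_F(\Omega)^2-4\kappa A_0$ is large; the transplanted eigenfunction alone does not suffice there.
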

 In order to prove Theorem \ref{iso_vol} we adapt in the anisotropic case the proof of Freitas and Krejcirik contained in \cite{fk}. This proof makes use of the classical method of parallel coordinates, developed for the Euclidean case in \cite{pw} and for the Riemanian case in \cite{s}.

We assume that $\de\Omega$ is composed by a finite union of $C^2$ Jordan curves $\Gamma_0,\dots,\Gamma_N$, where $\Gamma_0$ is the outer boundary of $\Omega$, i.e. $\Omega$ lies in the interior $\Omega_0$ of $\Gamma_0$. We observe that, if $N=0$, then $\Omega$ is simply connected and $\Omega=\Omega_0$. We denote by $$L_0^F:=P_F(\Omega_0)$$
the outer anisotropic perimeter.  Therefore, by the anisotropic isoperimetric inequality, we have
\begin{equation}
	(L_0^F)^2\geq 4\kappa A_0,
	\end{equation}
where $A_0=V(\Omega)$ denotes the area of $\Omega$ (not of $\Omega_0$).

\noindent We now introduce the \textbf{anisotropic parallel coordinate method} based at the outer boundary $\Gamma_0$. Let $\rho_F:\Omega_0\rightarrow(0,\infty)$ be the anisotropic distance function from the outer boundary $\Gamma_0$:
$$ \rho_F(x)=d_F(x,\Gamma_0).$$
\noindent Let $$A_F(t)=V(\{x\in\Omega\;|\;0<\rho_f(x)<t\}) $$ 
denote the area of $\Omega_t=\Omega\setminus\tilde{\Omega}_t$.
and let
$$L_F(t)=\int_{\rho_F^{-1}(t)\cap \Omega}  F(\nu_{\de\Omega}(x))\;d\mathcal{H}^1(x).$$

\begin{rem}
	By lemma \ref{der}, we obtain that, for almost every $t\in[0,r_F(\Omega_0)]$,
	\begin{equation}\label{area_len}
	A'_F(t)=L_F(t).
	\end{equation}
\end{rem}

\subsection{Step $1$: use of the anisotropic parallel coordinates.}    

Let $\phi:[0,r_F(\Omega)]\rightarrow\mathbb{R}$ be a smooth function and consider the test function $$ u=\phi\circ A_F\circ \rho_F,$$ which is Lipschitz in $\Omega$. Using the anisotropic parallel coordinates, the coarea formula and the fact that $F\left(D\rho_F\right)=1$, we obtain the following relations:

	\begin{align*}
 &\quad	||u||^2_{L^2(\Omega)}=\int_{\Omega}  u^2(x)\;dx=\int_{\Omega} \left(\phi\circ A_F\circ\rho_F(x)\right)^2dx=\\ =& \int_{0}^{r_F(\Omega)}\left(\int_{\{\rho_F(x)=t \}} \left(\phi\circ A_F\circ\rho_F(x)\right)^2\dfrac{1}{|D\rho_F(x)|}\;d\mathcal{H}^1(x)\right)\;dt\\& =\int_{0}^{r_F(\Omega)} \phi(A_F(t))^2\;P_F(  \{  \rho_F(x)<t  \}   )\;dt=\\&=\int_{0}^{r_F(\Omega)} \phi(A_F(t))^2\;A'_F(t)\;dt;
\\&\\
	\\&\quad\int_{\Omega}\left( F^2\left(Du(x)  \right) \right)dx=\int_{\Omega} F^2\left(  \phi'\left(A_F\circ\rho_F\left(x\right)\right)A'_F\left(\rho_F\left(x\right)\right) D \rho_F\left(x\right) \right)dx=\\&=\int_{\Omega} \left(\phi'\left(A_F\circ\rho_F\left(x\right)\right)\right)^2\left( A'_F\left(\rho_F\left(x\right)\right)\right)^2dx=\int_{0}^{r_F(\Omega)}\left(\phi'\left(A_F\left(t\right)\right)\right)^2\left(A'_F\left(t\right)\right)^3dt;\\&\\
	\\&\int_{\de\Omega} |u(x)| ^2F(\nu_{\de\Omega}(x))\;d\mathcal{H}^1(x)=\int_{\de\Omega}\left(\phi\circ A_F\circ\rho_F\left(x\right)\right)^2 F(\nu_{\de\Omega}(x))\;d\mathcal{H}^1(x)=\\[-0.5cm]
	\\&=\left(\phi\circ A_F\left(0\right)\right)^2 P_F(\Omega)\geq \phi^2(0) \;L_0.
	\end{align*}
	$ $ \\ 
	$ $ \\ 
	Therefore we have  that
	\begin{equation}\label{prv}
	 \lambda(\Omega)\leq \dfrac{\int_{0}^{r_F(\Omega)}\left(\phi'\left(A_F\left(t\right)\right)\right)^2\left(A'_F\left(t\right)\right)^3dt+\alpha\; \phi^2(0) \;L_0^F}{\int_{0}^{r_F(\Omega)} \phi(A_F(t))^2\;A'_F(t)\;dt}.
	\end{equation}

	


\subsection{Step $2$: from domains to annuli.}
 We adapt in the anisotropic case the idea contained in \cite{pw}. We consider the following change of variables:
 \begin{equation}\label{change_variables}
 R(t):=\dfrac{\sqrt{\left(L^F_0\right)^2-4\kappa A_F(t)}}{2\kappa}
 \end{equation}
on the interval $[r_1,r_2]$, where 
\begin{equation}\label{radii}
	r_1:=R\left(r_F\left(\Omega\right)\right)=\dfrac{\sqrt{\left(L^F_0\right)^2-4\kappa A_0}}{2\kappa},\qquad r_2:=R(0)=\dfrac{L_0^F}{2\kappa}.
\end{equation}

\begin{rem}
	Thanks to \eqref{anis_iso_inequality}, the transformation  \eqref{change_variables} is well defined on the set  $[0, r_F(\Omega)]$.
\end{rem}
We introduce now the function 
$$\psi(r):=\phi\left(\dfrac{\left(L_0^F\right)^2}{4\kappa}-\kappa r^2 \right)$$
and we obtain the following expressions:
$$\int_{\Omega}  u^2(x)\;dx= 2\kappa\int_{r_1}^{r_2} \left(\psi(r)\right)^2 r\;dr;$$
$$ \int_{\Omega}\left( F^2\left(Du(x)  \right) \right)dx= 2\kappa\int_{r_1}^{r_2}\left(\psi'(r)\right)^2\left(R'(r)\right)^2 r\;dr ;$$
$$ \int_{\Omega} |u(x)| ^2F(\nu_{\de\Omega}(x))\;dx\geq L^F_0\; \psi(r_2)^2.$$

\begin{rem}
	The radii in \eqref{radii} are such that the $F$-annulus $A^F_{r_1,r_2}:=\mathcal{W}_{r_2}\setminus\overline{\mathcal{W}}_{r_1}$ has the same area $A_0$ as the original domain $\Omega$.
	We observe that the  transformation \eqref{change_variables} maps $\de\Omega_t$ into the Wulff shape of radius $R(t)$; so $\Gamma_0$ is mapped into the Wulff shape of equal anisotropic perimeter. Moreover, $\Omega_t$ is mapped in the anisotropic annulus of area $A_F(t)$.
\end{rem}

\begin{prop}
	Let $\Omega$ be a bounded planar domain of class $C^2$, then
 \begin{equation*}
 |R'(t)|\leq 1,
 \end{equation*}
 where $R$ is defined in \eqref{change_variables}.
\end{prop}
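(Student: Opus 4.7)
The plan is to differentiate $R(t)$ explicitly, reduce the claim $|R'(t)|\leq 1$ to a geometric inequality involving $L_F(t)$, $A_F(t)$, and $L_0^F$, and verify it by a monotonicity argument for an auxiliary function. Squaring \eqref{change_variables} gives $(2\kappa R(t))^2 = (L_0^F)^2 - 4\kappa A_F(t)$. Differentiating in $t$ and using \eqref{area_len} yields
$$R'(t) = -\frac{L_F(t)}{2\kappa R(t)}.$$
Since $L_F(t)\geq 0$ and $R(t)>0$ on the interior of the interval, the bound $|R'(t)|\leq 1$ is equivalent to $L_F(t)\leq 2\kappa R(t)$, which, after squaring, becomes
$$g(t) := L_F(t)^2 + 4\kappa A_F(t) \leq (L_0^F)^2.$$

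The next step is to show that $g$ is non-increasing. At $t=0$ one has $A_F(0)=0$ and $L_F(0)=P_F(\Gamma_0)=L_0^F$, so $g(0)=(L_0^F)^2$. Differentiating and applying \eqref{area_len} once more,
$$g'(t) = 2L_F(t)\bigl(L_F'(t)+2\kappa\bigr).$$
Since $L_F\ge 0$, the sign of $g'$ is controlled by $L_F'(t)+2\kappa$, so it suffices to show $L_F'(t)\leq -2\kappa$ for a.e.\ $t\in(0,r_F(\Omega_0))$, equivalently the Steiner-type bound
$$L_F(t) + 2\kappa t \leq L_0^F \qquad \text{for every } t\in[0,r_F(\Omega_0)].$$

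For this last inequality I would combine two facts already on the table. First, the concavity of $t\mapsto L_F(t)$, which the paper records as a consequence of the general Brunn--Minkowski theorem and the concavity of the anisotropic distance function: this makes $L_F'$ non-increasing and reduces the bound to $L_F'(0^+)\leq -2\kappa$. Second, the anisotropic Steiner formula \eqref{steiner_2}: if $\Omega_0$ is convex, the inner parallel $\{\rho_F\geq t\}$ is itself convex and $\{\rho_F\geq t\}+t\mathcal{W}\subseteq\Omega_0$, so \eqref{steiner_2} together with perimeter monotonicity under convex inclusion gives $L_F(t)+2\kappa t\leq L_0^F$ directly; in the general $C^2$ simply connected case the initial rate $L_F'(0^+)=-2\kappa$ comes from the anisotropic Gauss--Bonnet identity asserting that the total anisotropic curvature of $\Gamma_0$ equals $2\kappa$, and by concavity the bound propagates to all $t>0$. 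The main obstacle is precisely this last step, establishing $L_F'(0^+)\leq -2\kappa$ without convexity of $\Omega_0$; once it is in place, $g'\leq 0$ yields $g(t)\leq g(0)=(L_0^F)^2$, which is equivalent to $|R'(t)|\leq 1$.
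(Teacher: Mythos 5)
Your opening reduction coincides with the paper's: differentiating \eqref{change_variables} and using \eqref{area_len} gives $R'(t)=-L_F(t)/\sqrt{(L_0^F)^2-4\kappa A_F(t)}$, and $|R'(t)|\le 1$ amounts to $L_F(t)^2+4\kappa A_F(t)\le (L_0^F)^2$. From there, however, you take a detour that creates a gap you yourself cannot close. Proving the bound via $g'(t)=2L_F(t)\bigl(L_F'(t)+2\kappa\bigr)\le 0$ requires the pointwise a.e.\ differential inequality $L_F'(t)\le -2\kappa$ (together with absolute continuity of $L_F$), and your route to it --- concavity of $t\mapsto L_F(t)$ plus the initial rate $L_F'(0^+)\le -2\kappa$ from an anisotropic Gauss--Bonnet identity --- is precisely the step you flag as unresolved for non-convex $\Omega_0$. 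Moreover, the concavity you invoke is not what the paper records: the paper states concavity of $P_F(\tilde{\Omega}_t)$, where the distance is taken from the whole boundary and the argument rests on convexity, whereas your $L_F(t)$ is the weighted length of the level set of the distance from the outer boundary $\Gamma_0$ intersected with a possibly multiply connected $\Omega$; for non-convex domains these level curves develop a cut locus, $L_F$ need not be concave and may jump, so the scheme ``initial slope $-2\kappa$ propagated by concavity'' does not go through. As written, the proof is incomplete at its crucial point.

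The repair is to notice that the pointwise bound $L_F'\le-2\kappa$ is not needed: the integrated Steiner-type inequality $L_F(t)\le L_0^F-2\kappa t$ (which you wrote down, but treated as ``equivalent'' to the derivative bound --- the equivalence only goes one way) suffices by itself, and this is exactly the paper's argument. Indeed, from it $A_F(t)=\int_0^t L_F(v)\,dv\le L_0^F t-\kappa t^2$, hence
\begin{equation*}
L_F(t)^2\le \left(L_0^F-2\kappa t\right)^2=(L_0^F)^2-4\kappa\left(L_0^F t-\kappa t^2\right)\le (L_0^F)^2-4\kappa A_F(t),
\end{equation*}
which is $|R'(t)|\le 1$, with no differentiation of $g$, no concavity, and no Gauss--Bonnet. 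The paper obtains $L_F(t)\le L_0^F-2\kappa t$ directly from the anisotropic Steiner formula \eqref{steiner_2} applied to the parallel sets based at $\Gamma_0$; if you want a fully self-contained argument, that inequality is the single geometric fact to justify, and it is weaker (and easier) than the a.e.\ derivative bound your version of the proof demands.
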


\begin{proof}
	From \eqref{area_len} follows that, for almost every $t\in [0,r_F(\Omega)]$ we have 
\begin{equation}\label{rad}
R'(t)=-\dfrac{L_F(t)}{\sqrt{\left(L_0^F\right)^2-4\kappa A_F(t)}}.
\end{equation}
Using the Steiner formula we obtain for almost every $t\in[0,r_F(\Omega)]$
$$  L_F(t)\leq L_0^F-2\kappa t;$$
$$A_F(t)=\int_{0}^{t}L_F(v)\;dv\leq L_0^F t-\kappa t^2. $$
Therefore,
$$ L_F(t)^2\leq \left(L_0^F\right)^2-4\kappa A_F(t),$$
and putting this in \eqref{rad} the thesis follows.

\end{proof}

We obtain this upper bound
\begin{equation}\label{bound}
	\lambda_{1,F}(\alpha , \Omega)\leq \inf_{\psi\neq 0} \dfrac{\int_{r_1}^{r_2}\psi'(r)^2 r\;dr+\alpha\;r_2\:\psi(r_2)^2}{\int_{r_1}^{r_2}\psi(r)^2r\;dr}:=\mu(\alpha , A^F_{r_1,r_2}),
\end{equation}
so the infimum is attained for the first eigenfunction of the Laplacian in $A^F_{r_1,r_2}$, with anisotropic Robin boundary condition on $\de\mathcal{W}_2$ and anisotropic Neumann boundary conditions on $\de\mathcal{W}_1$.
Therefore we have proved the following proposition.
\begin{prop}\label{annulus}
Let $\alpha\leq 0$. For any bounded planar domain $\Omega$ of class $C^2$,
$$\lambda_{1,F}( \alpha ; \Omega)\leq \mu(\alpha , A^F_{r_1,r_2}) ,$$
where $A^F_{r_1,r_2}$ is the anisotropic annulus of the same area as $\Omega$ with radii \eqref{radii}.
\end{prop}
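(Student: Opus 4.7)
The plan is to synthesize the constructions carried out in Steps 1 and 2 into a direct chain of inequalities. For any smooth profile $\phi:[0,r_F(\Omega)]\to\R$, the test function $u=\phi\circ A_F\circ\rho_F$, built from the anisotropic distance $\rho_F$ to the outer boundary $\Gamma_0$, is admissible in \eqref{var_char}. The $1$-homogeneity of $F$, the eikonal identity $F(D\rho_F)=1$ a.e., and the coarea formula reduce both $\int_\Omega F^2(Du)\,dx$ and $\int_\Omega u^2\,dx$ to one-dimensional integrals on $[0,r_F(\Omega)]$ weighted by $A_F'(t)=L_F(t)$, as given by Lemma \ref{der}. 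For the boundary term, one notes that $u\equiv\phi(0)$ on $\Gamma_0$, so that $\int_{\de\Omega}u^2 F(\nu_{\de\Omega})\,d\mathcal{H}^1\ge\phi^2(0)L_0^F$; since $\alpha\le 0$, this lower bound upgrades to an upper bound on the Rayleigh quotient, yielding inequality \eqref{prv}.

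Next, I would perform the change of variables $r=R(t)$ from \eqref{change_variables}, which is well defined on $[0,r_F(\Omega)]$ thanks to the anisotropic isoperimetric inequality \eqref{anis_iso_inequality}, together with the substitution $\psi(r)=\phi\bigl((L_0^F)^2/(4\kappa)-\kappa r^2\bigr)$. The three transformed integrals then take precisely the form of a weighted radial integral on $[r_1,r_2]$ matching the radial Rayleigh quotient on the annulus $A^F_{r_1,r_2}$, except that the kinetic term carries an extra factor $(R'(r))^2$. To eliminate it, I would invoke the bound $|R'(t)|\le 1$: differentiating \eqref{change_variables} and using Steiner's formulas \eqref{steiner_1}--\eqref{steiner_2} gives $L_F(t)\le L_0^F-2\kappa t$ and $A_F(t)\le L_0^F t-\kappa t^2$, hence $L_F(t)^2\le (L_0^F)^2-4\kappa A_F(t)$, i.e.\ $(R'(t))^2\le 1$. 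Dropping this factor upward and passing to the infimum over $\psi\neq 0$ recovers exactly $\mu(\alpha,A^F_{r_1,r_2})$.

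The most delicate step is the coarea-based reduction of $\int_\Omega F^2(Du)\,dx$ to a one-dimensional integral weighted by $L_F(t)$: since $\rho_F$ is only Lipschitz, the coarea factor $1/|D\rho_F|$ has to be combined with the $F$-weight on $\nu_{\de\tilde{\Omega}_t}$ in exactly the way compatible with the identity $A_F'(t)=L_F(t)$ from Lemma \ref{der}. Once this bookkeeping is settled, the Steiner-based inequality $L_F(t)^2\le(L_0^F)^2-4\kappa A_F(t)$ turns the isoperimetric deficit into the correct sign for $\alpha\le 0$, and the final annulus bound follows by a one-line infimum over $\psi$.
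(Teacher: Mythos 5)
Your proposal is correct and follows essentially the same route as the paper: the test function $u=\phi\circ A_F\circ\rho_F$ with the coarea reduction and the sign of $\alpha$ giving \eqref{prv}, then the change of variables \eqref{change_variables} with $\psi$, and finally the Steiner-based estimate $L_F(t)^2\le (L_0^F)^2-4\kappa A_F(t)$, i.e.\ $|R'|\le 1$, to discard the factor $(R')^2$ and pass to the infimum defining $\mu(\alpha,A^F_{r_1,r_2})$. No substantive difference from the argument in Steps 1--2 of the paper.
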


\subsection{Step $3$: from annuli to disks.}
Let $\mathcal{W}_{r_1,r_2}$ be the Wulff shape of the same area as the anisotropic annulus $A^F_{r_1,r_2}$, which has the same area $A_0$ as $\Omega$. So, we have that
\begin{equation}\label{r3}
	r_3=\sqrt{\dfrac{A_0}{\kappa}},
\end{equation}
where $r_3$ is the radius of $\mathcal{W}_{r_1,r_2}$.
In \cite{fk} we find the following asymptotics as $\alpha\rightarrow+\infty$:
\begin{equation}\label{asy1}
	 \lambda_{1,F}( \alpha , \mathcal{W}_{r_1,r_2})= 2\alpha \dfrac{r_3}{r_3^2}+O(\alpha^2) \quad \text{(Robin Wulff);}
\end{equation}
\begin{equation}\label{asy2}
\mu^\alpha(A^F_{r_1,r_2})=2\alpha\dfrac{r_2}{r_3^2}+O(\alpha^2) \quad \text{ (Neumann-Robin annulus). }
\end{equation}
 Using them we can prove that, for $\alpha<0$ small enough, 
 \begin{equation}\label{disk_annulus}
 	\mu(\alpha , A^F_{r_1,r_2})\leq \lambda_{1,F}(\alpha , \mathcal{W}_{r_1,r_2}),
 \end{equation}
 where $\mathcal{W}_{r_1,r_2}$ is the Wulff shape of the same area as the anisotropic annulus $A^F_{r_1,r_2}$.
 Thus, we have proved the following theorem.
 \begin{prop}
 	For any bounded domain $\Omega$ of class $C^2$, there exists a negative number $\alpha_0=\alpha_0(A_0, L_0^F)$ such that $$\lambda_{1,F}(\alpha , \Omega)\leq\lambda_{1,F}( \alpha , \mathcal{W}^*_{\Omega}) $$
 	holds $\forall\alpha\in[\alpha_0,0]$, where $\mathcal{W}^*_{\Omega}$ is the Wulff shape of the same area as $\Omega$.
 \end{prop}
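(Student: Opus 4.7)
The plan is to chain the intermediate inequalities already established in Steps $1$--$3$. By Proposition \ref{annulus} one has $\lambda_{1,F}(\alpha,\Omega) \le \mu(\alpha, A^F_{r_1,r_2})$ for every $\alpha \le 0$, where $A^F_{r_1,r_2}$ is the anisotropic annulus of area $A_0$ obtained via the change of variables \eqref{change_variables}. By \eqref{r3}, the Wulff shape $\mathcal W_{r_3}$ with $r_3=\sqrt{A_0/\kappa}$ has the same area $A_0$ as $\Omega$, so $\mathcal W_{r_3}=\mathcal W^*_\Omega$. It therefore suffices to produce a threshold $\alpha_0<0$ such that $\mu(\alpha, A^F_{r_1,r_2}) \le \lambda_{1,F}(\alpha, \mathcal W^*_\Omega)$ for all $\alpha \in [\alpha_0,0]$; composing with Proposition \ref{annulus} then yields the theorem.

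For this comparison I would invoke the asymptotics \eqref{asy1} and \eqref{asy2}. Subtracting them gives, as $\alpha\to 0^-$,
\begin{equation*}
\mu(\alpha, A^F_{r_1,r_2}) - \lambda_{1,F}(\alpha, \mathcal W^*_\Omega) = \frac{2\alpha(r_2-r_3)}{r_3^2} + O(\alpha^2).
\end{equation*}
The inequality $r_2\ge r_3$ follows from the anisotropic isoperimetric inequality \eqref{anis_iso_inequality} applied to $\Omega_0$: since $(L_0^F)^2 \ge 4\kappa V(\Omega_0)\ge 4\kappa A_0$, one has $r_2 = L_0^F/(2\kappa) \ge \sqrt{A_0/\kappa} = r_3$. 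If $\Omega$ already coincides with a Wulff shape there is nothing to prove; otherwise $r_2 > r_3$ strictly and, since $\alpha<0$, the linear term is strictly negative. Choosing $\alpha_0=\alpha_0(A_0, L_0^F)<0$ small enough in absolute value ensures the quadratic remainder does not dominate, giving the required inequality on $[\alpha_0,0]$.

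The principal difficulty I expect is the rigorous justification of \eqref{asy1}--\eqref{asy2} in the Finsler setting, since the explicit Bessel representations used for the Euclidean disk and annulus in \cite{fk} are unavailable. The natural substitute is analytic perturbation of the first eigenvalue: the Neumann value $\lambda_{1,F}(0,\cdot)=0$ is simple with a constant eigenfunction, and the bound \eqref{in.lam}, evaluated with this constant test function, identifies the leading coefficient as $\alpha\, P_F/V$, which is $2\alpha/r_3$ on $\mathcal W_{r_3}$ and $2\alpha r_2/r_3^2$ on the mixed Neumann-Robin problem on $A^F_{r_1,r_2}$. The $O(\alpha^2)$ remainder must be controlled uniformly in the geometric data, so that the threshold $\alpha_0$ depends only on $A_0$ and $L_0^F$; this is a standard but delicate application of second-order Rayleigh-quotient perturbation.
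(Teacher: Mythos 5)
Your proposal is correct and takes essentially the same route as the paper: the paper likewise chains Proposition \ref{annulus} with the annulus-versus-Wulff comparison \eqref{disk_annulus}, which it deduces from the asymptotics \eqref{asy1}--\eqref{asy2} (quoted from \cite{fk}, with $\alpha\to 0^-$) together with $r_2\ge r_3$ coming from the anisotropic isoperimetric inequality, the threshold depending only on $A_0$ and $L_0^F$. Your additional justification of the first-order coefficients via perturbation of the Neumann eigenvalue is consistent with the paper, whose later lemma \eqref{der_lam} gives exactly the derivative $\int_{\de\Omega}u_\alpha^2F(\nu_{\de\Omega})\,d\mathcal{H}^1$ identifying the slopes $2\alpha/r_3$ and $2\alpha r_2/r_3^2$.
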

  
  \begin{rem}
  	Using the above asymptotics  we can show that
  	\begin{equation*}
  		\dfrac{d}{d\alpha}\lambda_{1,F}( \alpha , \Omega)\arrowvert_{\alpha=0}=\dfrac{\mathcal{H}^1(\de\Omega)}{|\Omega|}.
  	\end{equation*}
  \end{rem}
 
\subsection{Step $4$:  uniform behaviour and conclusion.}

In order to complete the proof of the Theorem \ref{iso_vol}, it remains only to show the following fact.
\begin{prop}\label{pro}
The constant $\alpha_0$ of Proposition \ref{annulus} is indipendent of $L_0$.
\end{prop}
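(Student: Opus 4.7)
The plan is to show that the threshold $\alpha_0$ in the preceding proposition can be chosen to depend only on $A_0$, not on the outer anisotropic perimeter $L_0 := L_0^F$. The key observation is that $\lambda_{1,F}(\alpha, \mathcal{W}^*_\Omega)$ depends only on $r_3 = \sqrt{A_0/\kappa}$, so all $L_0$-dependence in the comparison \eqref{disk_annulus} is concentrated in the annular quantity $\mu(\alpha, A^F_{r_1,r_2})$ through the radii \eqref{radii}. By the anisotropic isoperimetric inequality, $r_2 = L_0/(2\kappa) \in [r_3, +\infty)$, so the task reduces to proving a uniform version of \eqref{disk_annulus} over this half-line.

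I would split the analysis into two complementary regimes. In the \emph{thin-annulus regime} $r_2 > M$, where $M$ will be chosen below, the constant test function $\psi \equiv 1$ in the variational characterization \eqref{bound} gives
\[
\mu(\alpha, A^F_{r_1,r_2}) \;\leq\; \frac{\alpha\, r_2}{\int_{r_1}^{r_2} r\, dr} \;=\; \frac{2\alpha\, r_2}{r_3^2},
\]
where I used $r_2^2 - r_1^2 = r_3^2$. Since $\lambda_{1,F}(\alpha, \mathcal{W}_{r_3})$ is a fixed continuous function of $\alpha$ alone and \eqref{asy1} gives $|\lambda_{1,F}(\alpha, \mathcal{W}_{r_3})|/|\alpha| \to 2/r_3$ as $\alpha \to 0^-$, fixing any $\alpha_*^{(2)} < 0$ and setting
\[
M := \sup_{\alpha \in [\alpha_*^{(2)},0]} \frac{r_3^2\,|\lambda_{1,F}(\alpha, \mathcal{W}_{r_3})|}{2|\alpha|} < \infty
\]
makes the inequality $\mu(\alpha, A^F_{r_1,r_2}) \leq \lambda_{1,F}(\alpha, \mathcal{W}_{r_3})$ automatic whenever $r_2 > M$ and $\alpha \in [\alpha_*^{(2)}, 0]$.

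In the complementary \emph{bounded regime} $r_2 \in [r_3, M]$, I would argue by compactness. Subtracting \eqref{asy2} from \eqref{asy1} yields
\[
\lambda_{1,F}(\alpha, \mathcal{W}_{r_3}) - \mu(\alpha, A^F_{r_1,r_2}) \;=\; \frac{2\alpha(r_3 - r_2)}{r_3^2} + O(\alpha^2),
\]
whose leading term is nonnegative for $\alpha \leq 0$ (since $r_2 \geq r_3$) and vanishes identically at $r_2 = r_3$, where the annulus degenerates to $\mathcal{W}_{r_3}$ and the two eigenvalue problems coincide. Since $\mu(\alpha, A^F_{r_1,r_2})$ depends continuously on $(\alpha, r_2)$ via the radial Sturm--Liouville problem associated to \eqref{bound}, compactness on $[r_3, M]$ produces $\alpha_*^{(1)} = \alpha_*^{(1)}(A_0, M) < 0$ for which the inequality holds on $[\alpha_*^{(1)}, 0] \times [r_3, M]$. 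Setting $\alpha_0(A_0) := \max(\alpha_*^{(1)}, \alpha_*^{(2)})$ then produces a threshold independent of $L_0$, as required. The main obstacle lies in this compactness step: specifically, making the $O(\alpha^2)$ remainder in \eqref{asy2} uniform in $r_2 \in [r_3, M]$, which I expect to follow from the smooth dependence of the radial eigenvalue problem on the parameter $r_2$ together with the fact that the remainder itself degenerates as $r_2 \to r_3$.
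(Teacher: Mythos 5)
Your treatment of the thin-annulus regime is correct and is actually more elementary than the paper's: plugging the constant test function into \eqref{bound} and using $r_2^2-r_1^2=r_3^2$ gives $\mu(\alpha,A^F_{r_1,r_2})\le 2\alpha r_2/r_3^2$, which for $r_2$ larger than your $M$ lies below $\lambda_{1,F}(\alpha,\mathcal{W}_{r_3})$ on a fixed interval $[\alpha_*^{(2)},0]$. The paper handles the large-$r_2$ (large $\epsilon$) case differently, via concavity of $\alpha\mapsto\mu^{\alpha}(A^F_{r_1,r_2})$ (proved through the derivative formula \eqref{der_lam}) and a tangent-line comparison with $\Gamma_B$; your test-function bound replaces that machinery and is fine as far as it goes.

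The genuine gap is in your ``bounded regime'' $r_2\in[r_3,M]$, and it sits exactly where the real difficulty of the proposition lies, namely $r_2\to r_3^+$ (equivalently $r_1\to 0$, $\epsilon\to 0$). At $r_2=r_3$ the annulus degenerates to the Wulff shape and $\lambda_{1,F}(\alpha,\mathcal{W}_{r_3})-\mu(\alpha,A^F_{r_1,r_2})$ vanishes identically in $\alpha$; hence a compactness/continuity argument on $[\alpha_*^{(1)},0]\times[r_3,M]$ cannot produce a sign: for $r_2$ slightly above $r_3$ and a fixed negative $\alpha$, continuity only says the difference is small, not nonnegative. The first-order term $2\alpha(r_3-r_2)/r_3^2$ is of size $|\alpha|(r_2-r_3)$ and degenerates together with the domain, so everything hinges on showing that the $O(\alpha^2)$ remainder in \eqref{asy2} is controlled uniformly, with a constant that itself vanishes (at the right rate) as $r_2\to r_3$. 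You name this as the main obstacle and assert it should follow from ``smooth dependence'', but that is precisely the statement to be proved, and it is not soft: the paper (following \cite{fk}) resolves it by writing the eigenvalue conditions explicitly as secular equations in modified Bessel functions --- \eqref{wulff_bessel} for the Wulff shape and the corresponding Neumann--Robin equation for the annulus with $r_1=\sqrt{2\epsilon r_3+\epsilon^2}$, $r_2=r_3+\epsilon$ --- and showing that the curves $\Gamma_A$ and $\Gamma_B$ have no intersection in a neighbourhood of $\alpha=0$ uniformly in $\epsilon$ small. To close your argument you would need either that explicit computation or a quantitative substitute (for instance, monotonicity in $r_2$ of the Neumann--Robin eigenvalue at fixed area and fixed small $|\alpha|$, or a uniform two-term expansion in $\alpha$ with an explicit remainder bound degenerating as $r_2\downarrow r_3$); as written, the proposal does not establish the uniformity near the degenerate annulus, which is the heart of Proposition \ref{pro}.
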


 Following \cite{fk}, we need to show that the neighbourhood of zero in which \eqref{disk_annulus} does not degenerate in both cases  when $r_1\rightarrow 0$ and $r_2\rightarrow +\infty$, 
So, we are going to prove that  $\alpha_0$ remains bounded away from $0$ uniformly in this two istances.

We fix $\epsilon>0$ and we consider $$ r_1=\sqrt{(2\epsilon r_3+\epsilon^2)}, \qquad r_2=r_3+\epsilon,$$
where $r_3$ is fixed and equall to $\sqrt{A_0/\kappa}$. In an analogous way to the one reported in \cite{fk}, it can be proved that there exists $\alpha^*<0$ such that the curve $\Gamma_A:\alpha\longmapsto\mu^\alpha(A^F_{r_1,r_2})$ stays below the curve $\Gamma_B:\alpha \longmapsto\lambda_{1,F}(\alpha , \mathcal{W}_{r_3})$ for all $\epsilon>0$ and $\forall \alpha\in(\alpha^*,0)$.                                                  

Because of the simplicity of the eigenvalues, both the curves are analytic. Moreover, taking into account the asymptotics \eqref{asy1} and \eqref{asy2} we have that
$$\dfrac{d}{d\alpha} \mu(\alpha , \mathcal{W}_{r_1,r_2})\leq \dfrac{d}{d\alpha} \lambda_{1,\alpha}(\alpha , A^F_{r_1,r_2}).$$

\begin{rem}
We prove that the curves $\Gamma_A$ are concave in $\alpha$. Let $\epsilon>0$ and let  $\psi$ be the first eigenfunction $\mu^{\alpha+\epsilon}(A^F_{r_1,r_2})$ of the Laplacian in the anisotropic annulus: We can choose $\psi$ normalised in to $1$, so we have
\begin{equation}\label{eig1}
\mu^{\alpha+\epsilon}(A^F_{r_1,r_2})=\int_{r_1}^{r_2}\psi'(r)^2 r\;dr+(\alpha+\epsilon)\;r_2\:\psi(r_2)^2.
\end{equation}
Let $\varphi$ be the first eigenfunction $\mu^{\alpha}(A^F_{r_1,r_2})$ normalized to $1$:
\begin{equation}\label{eig2}
\mu^{\alpha}(A^F_{r_1,r_2})=\int_{r_1}^{r_2}\phi'(r)^2 r\;dr+(\alpha)\;r_2\:\phi(r_2)^2.
\end{equation}
Now, putting $\phi$ as a test function in the variational formula of  $\mu^{\alpha+\epsilon}(A^F_{r_1,r_2})$ we obtain
$$\mu^{\alpha+\epsilon}(A^F_{r_1,r_2})\leq  \int_{r_1}^{r_2}\phi'(r)^2 r\;dr+(\alpha+\epsilon)\;r_2\:\phi(r_2)^2=\mu^{\alpha}(A^F_{r_1,r_2})+\epsilon  \;r_2\:\phi(r_2)^2.$$
In order to prove our claim, we need only to show that 
$$ \dfrac{d}{d\alpha}\mu^{\alpha}(A^F_{r_1,r_2})=  \;r_2\:\phi(r_2)^2.$$
\end{rem}
We prove the following more general result.
\begin{lem}
Let $\Omega$ be a bounded subset of $\mathbb{R}^2$ and let $u_\alpha$ an eigenfunction related to the eigenvalue $\lambda( \alpha ,\Omega) $, defined in \eqref{min_problem}, such that $\Vert u_\alpha\Vert_{L^2(\Omega)}=1$. Then 
\begin{equation}\label{der_lam}
\lambda_{1,F}^\prime( \alpha , \Omega ) := \frac{d\lambda_{1,F}( \alpha , \Omega )}{d\alpha}=\ds\int_{\de\Omega} u_\alpha^2 F(\nu_{\partial\Omega})d\mathcal{H}^1.
\end{equation}
\end{lem}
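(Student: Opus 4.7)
The plan is to use the variational characterization together with a sandwich argument based on using one eigenfunction as a test function in the Rayleigh quotient associated with a nearby parameter. Since both $\alpha \mapsto \lambda_{1,F}(\alpha,\Omega)$ is the minimum of an $\alpha$-affine family of Rayleigh quotients, this should produce matching upper and lower bounds on the difference quotient whose limits coincide.

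First, I would record the baseline identity: if $u_\alpha$ realizes the minimum in \eqref{min_problem} and is $L^2$-normalized, then inserting $u_\alpha$ into \eqref{var_char} gives
\[
\lambda_{1,F}(\alpha,\Omega)=\int_{\Omega}F^2(Du_\alpha)\,dx+\alpha\int_{\partial\Omega}u_\alpha^2\,F(\nu_{\partial\Omega})\,d\mathcal{H}^1 .
\]
Next, for $\alpha,\alpha_0\in(-\infty,0]$, I would feed $u_{\alpha_0}$ into the Rayleigh quotient at parameter $\alpha$ and $u_\alpha$ into the Rayleigh quotient at parameter $\alpha_0$. Each time the gradient integral is unchanged, and only the boundary term picks up the shift $(\alpha-\alpha_0)$. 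Subtracting the resulting inequalities from the baseline identity yields
\[
(\alpha-\alpha_0)\int_{\partial\Omega}u_{\alpha}^2\,F(\nu_{\partial\Omega})\,d\mathcal{H}^1 \;\le\; \lambda_{1,F}(\alpha,\Omega)-\lambda_{1,F}(\alpha_0,\Omega)\;\le\;(\alpha-\alpha_0)\int_{\partial\Omega}u_{\alpha_0}^2\,F(\nu_{\partial\Omega})\,d\mathcal{H}^1 .
\]
Dividing by $\alpha-\alpha_0$ (keeping track of the sign) sandwiches the difference quotient between two boundary integrals.

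Finally, I would pass to the limit $\alpha\to\alpha_0$. The needed input is the continuity $\alpha\mapsto u_\alpha$ at the level of boundary traces, i.e.\ $u_\alpha\to u_{\alpha_0}$ in $L^2(\partial\Omega)$. This follows from the simplicity of the first eigenvalue recalled in the previous proposition (so there is no branching or jump in the normalized eigenfunction), from the $L^2$-normalization, and from standard compactness: any sequence $u_{\alpha_n}$ with $\alpha_n\to\alpha_0$ is bounded in $W^{1,2}(\Omega)$ by \eqref{in.lam}, hence converges along a subsequence weakly in $W^{1,2}$ and strongly in $L^2(\Omega)$ and (by the compactness of the trace operator) in $L^2(\partial\Omega)$; the limit must be the unique positive $L^2$-normalized eigenfunction at $\alpha_0$, so the full family converges. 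Since $F(\nu_{\partial\Omega})$ is bounded, the boundary integrals in the sandwich both tend to $\int_{\partial\Omega}u_{\alpha_0}^2 F(\nu_{\partial\Omega})\,d\mathcal{H}^1$, which proves \eqref{der_lam}.

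The only delicate step is the continuity of $\alpha\mapsto u_\alpha$; everything else is formal manipulation of the Rayleigh quotient. Simplicity of $\lambda_{1,F}(\alpha,\Omega)$, which rules out eigenfunction branching, is essential here. As a side remark, the sandwich already implies that $\alpha\mapsto\lambda_{1,F}(\alpha,\Omega)$ is concave (the upper bound is affine in $\alpha$ with slope $\int u_{\alpha_0}^2 F(\nu)$ at each $\alpha_0$), so differentiability could alternatively be obtained from concavity together with the identification of the one-sided derivatives.
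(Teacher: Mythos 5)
Your argument is correct, but it is a genuinely different route from the paper's. The paper proves \eqref{der_lam} by differentiating the normalized identity $\lambda_{1,F}(\alpha,\Omega)=\int_\Omega F^2(Du_\alpha)\,dx+\alpha\int_{\partial\Omega}u_\alpha^2F(\nu_{\partial\Omega})\,d\mathcal{H}^1$ with respect to $\alpha$, then using the weak formulation \eqref{wf} with the test function $u_\alpha'$ and the relation $\int_\Omega u_\alpha u_\alpha'\,dx=0$ to cancel all terms containing $u_\alpha'$; this is a Feynman--Hellmann-type computation that tacitly assumes $\alpha\mapsto u_\alpha$ is differentiable (which really comes from simplicity and perturbation theory, more than the ``standard elliptic regularity'' invoked there). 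Your two-sided test-function swap avoids any differentiability of the eigenfunction branch: you only need continuity of $\alpha\mapsto u_\alpha$ at the level of boundary traces, which you obtain by compactness plus simplicity, and as a bonus the affine upper bound gives concavity of $\alpha\mapsto\lambda_{1,F}(\alpha,\Omega)$ directly --- precisely the property the paper wants in the surrounding remark about the curves $\Gamma_A$. One point to tighten: the uniform $W^{1,2}$ bound on $\{u_\alpha\}$ for $\alpha$ near $\alpha_0$ does not follow from \eqref{in.lam} alone, since for $\alpha<0$ the boundary term has the wrong sign; you should combine the upper bound $\lambda_{1,F}(\alpha,\Omega)\le\lambda_{1,F}(\alpha_0,\Omega)+(\alpha-\alpha_0)\int_{\partial\Omega}u_{\alpha_0}^2F(\nu_{\partial\Omega})\,d\mathcal{H}^1$ (or the constant test function) with an Ehrling-type trace inequality $\int_{\partial\Omega}u^2\,d\mathcal{H}^1\le\varepsilon\|Du\|_{L^2(\Omega)}^2+C_\varepsilon\|u\|_{L^2(\Omega)}^2$ to absorb the boundary term and conclude that $\|Du_\alpha\|_{L^2(\Omega)}$ stays bounded; after that, your identification of the limit as the unique positive normalized eigenfunction at $\alpha_0$ and the passage to the limit in the sandwich are fine.
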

\begin{proof}
From the variational characterization \eqref{min_problem} and using the fact that $\Vert u_\alpha\Vert_{L^2(\Omega)}=1$ we have
\begin{equation}\label{varcharnorm}
\lambda_{1,F}( \alpha , \Omega ) = \ds\int_{\Omega} F^2(D u_\alpha)\ dx + \alpha\ds\int_{\de\Omega} u_\alpha^2 F(\nu_{\de\Omega})\ d\mathcal{H}^1.
\end{equation}	
Deriving both sides of \eqref{varcharnorm} with respect to $\alpha$, we obtain
\begin{equation}\label{der1}
\lambda_{1,F}^\prime ( \alpha , \Omega ) = 2\ds\int_{\Omega} F(D u_\alpha)D_\xi F(D u_\alpha) D u_\alpha^\prime \ dx + \ds\int_{\de\Omega} u_\alpha^2 F(\nu_{\de\Omega})\ d\mathcal{H}^1 + 2\alpha\ds\int_{\de\Omega} u_\alpha u_\alpha^\prime F(\nu_{\de\Omega})\ d\mathcal{H}^1.
\end{equation}
Using the weak formulation \eqref{wf} of the problem in the equation \eqref{der1}, remembering that $u_\alpha^\prime$ is the derivative with respect to $\alpha$ and it is in the set of the test functions by standard elliptic regularity theory, we obtain
\begin{equation}\label{awf}
\lambda_1^\prime ( \alpha , \Omega ) = 2\lambda_1( \alpha , \Omega ) \ds\int_{\Omega} u_\alpha u_\alpha^\prime\ dx + \ds\int_{\de\Omega} u_\alpha^2 F(\nu_{\de\Omega})\ d\mathcal{H}^1,
\end{equation}
and, having in mind that, from the condition $\Vert u_\alpha\Vert_{L^2(\Omega)}=1$,
\begin{equation*}
\ds\int_{\Omega} u_\alpha u_\alpha^\prime\ dx = 0
\end{equation*} 
we get, from \eqref{awf}, the equation \eqref{der_lam}.
\end{proof}
Therefore, since the $\Gamma_A$ are concave in $\alpha $ and their derivative with respect to $\alpha$ are increasing with $\epsilon$,  we have that the tangent to the curve corresponding to a specific anisotropic annulus  intersects $\Gamma_B$  at one and only one point , $\alpha_1$, to the left of zero. Thanks to the concavity we can say that, for larger value of $\epsilon$,  any $\Gamma_A$ that intersects $\Gamma_ B$ must do so to the left of $\alpha_1$.

As far as the case when $\epsilon$ is small, we follow closely the proof presented in \cite{fk}. We study the intersection points of the two curves $\Gamma_A$ and $\Gamma_B$, comparing the following two equations; the first equation is the equation of the Wulff shape 
\begin{equation}\label{wulff_bessel}
	kI_1(k r_3)+\alpha I_0(k r_3)=0;
\end{equation}
the second equation is the one of the Neumann-Robin anisotropic annulus
\begin{align*}\label{annulus_bessel}
	& K_1(k\sqrt{2\epsilon r_3+\epsilon^2}) \left[kI_1\left( k\left(r_3+\epsilon\right)\right)+\alpha I_0\left( k\left( r_3+\epsilon\right) \right)\right]-	\\& I_1(k\sqrt{2\epsilon r_3+\epsilon^2}) \left[k K_1\left( k\left(r_3+\epsilon\right)\right)-\alpha K_0\left( k\left( r_3+\epsilon\right) \right)\right]=0.
\end{align*}

We denote here with $I_\nu$ and $K_\nu$ the modified Bessel functions (for their properties we refer to \cite{abra}).
 The solution in $\alpha$ of the intersection is given by 
 $$ \alpha=-k \dfrac{I_1(kr_3)}{I_0(k r_3)  } .$$
 The proof that there are no intersections between $\Gamma_A$ and $\Gamma_B$ for $\alpha$ close to zero  is the same as the one presented in \cite{fk}. In this way we have proved Proposition \ref{pro}.
\section{Isoperimetric estimates with a perimeter constraint}
Using the method of parallel coordinates we are able to prove also the following theorem.
\begin{thm}\label{per}
Let $\alpha\leq 0$ and let $\Omega\subseteq\mathbb{R}^2$ a bounded  domain of class $C^2$. Then
$$ \lambda_{1,F}( \alpha , \Omega)\leq \lambda_{1,F}(\alpha , \widetilde{\mathcal{W}}_{\Omega}),$$
where $\widetilde{\mathcal{W}}_{\Omega}$ is the Wulff shape with the same perimeter as $\Omega$.
\end{thm}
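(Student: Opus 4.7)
My plan is to repeat the anisotropic parallel coordinate construction from the proof of Theorem \ref{iso_vol}, observing that in this perimeter-constrained setting the annulus produced by that construction and the comparison Wulff shape share the same outer radius, which collapses the final comparison step. Concretely, Steps 1 and 2 of that proof — the test function $u=\phi\circ A_F\circ\rho_F$, the change of variable \eqref{change_variables}, and the use of the Steiner formulas \eqref{steiner_1}--\eqref{steiner_2} — are independent of which isoperimetric quantity is being fixed. Invoking Proposition \ref{annulus} directly gives
\begin{equation*}
\lambda_{1,F}(\alpha,\Omega)\leq\mu(\alpha,A^F_{r_1,r_2}),
\end{equation*}
where $r_2=L_0^F/(2\kappa)$ is precisely the radius of $\widetilde{\mathcal{W}}_\Omega$ (since $P_F(\mathcal{W}_{r_2})=2\kappa r_2=L_0^F$), and $r_1=\sqrt{(L_0^F)^2-4\kappa A_0}/(2\kappa)\geq 0$. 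The theorem therefore reduces to proving, for every $\alpha\leq 0$,
\begin{equation*}
\mu(\alpha,A^F_{r_1,r_2})\leq\lambda_{1,F}(\alpha,\mathcal{W}_{r_2}).
\end{equation*}

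To prove this comparison I would take the first eigenfunction $\psi$ of $\lambda_{1,F}(\alpha,\mathcal{W}_{r_2})$ — which depends only on $r=F^o(x)$ and is positive and monotone on $(0,r_2]$ — and use its restriction to $[r_1,r_2]$ as an admissible test function in \eqref{bound}. Setting $\lambda:=\lambda_{1,F}(\alpha,\mathcal{W}_{r_2})$, $I_1:=\int_0^{r_1}\psi'(r)^2\, r\,dr$ and $I_2:=\int_0^{r_1}\psi(r)^2\, r\,dr$, the Rayleigh identity satisfied by $\psi$ on the full Wulff shape $\mathcal{W}_{r_2}$ rearranges to
\begin{equation*}
\int_{r_1}^{r_2}\psi'(r)^2\, r\,dr+\alpha\,r_2\,\psi(r_2)^2=\lambda\int_{r_1}^{r_2}\psi(r)^2\, r\,dr+(\lambda I_2-I_1),
\end{equation*}
and dividing by the strictly positive denominator $\int_{r_1}^{r_2}\psi(r)^2\, r\,dr$ shows that the desired inequality is equivalent to the single estimate $\lambda I_2\leq I_1$.

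This final step is also what explains why the statement holds for every $\alpha\leq 0$, in sharp contrast with Theorem \ref{iso_vol}. By the general upper bound \eqref{in.lam} applied to $\mathcal{W}_{r_2}$ one has $\lambda\leq 0$, while $I_1,I_2\geq 0$, so $\lambda I_2\leq 0\leq I_1$ is immediate. No asymptotic expansion in $\alpha$ and no uniform-in-parameter concavity argument are needed: the non-positivity of the Robin parameter alone produces the full comparison. The only routine verifications to carry out are the radial symmetry of the first Robin eigenfunction on a Wulff shape (which is implicit already in the Bessel-type equation \eqref{wulff_bessel} of the previous proof), the reduction of its Rayleigh quotient to the one-dimensional form via the coarea formula and the identity $F(D\rho_F)=1$, and the admissibility of $\psi|_{[r_1,r_2]}$ as a test function, which is clear since $\psi$ is smooth on $(0,r_2]$ and bounded near $0$.
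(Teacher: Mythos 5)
Your reduction to the comparison $\mu(\alpha,A^F_{r_1,r_2})\leq\lambda_{1,F}(\alpha,\mathcal{W}_{r_2})$ and your proof of that comparison are correct, and in fact they amount to the paper's own Proposition \ref{ann<wulff_p} in integrated form: the paper integrates by parts and observes that $[r\phi_1(r)\phi_1'(r)]'=r\phi_1'(r)^2-\lambda r\phi_1(r)^2\geq0$ because $\lambda\leq0$ by \eqref{in.lam}, which upon integration over $(0,r_1)$ is exactly your inequality $\lambda I_2\leq I_1$; your split of the Rayleigh identity avoids the ODE and is, if anything, slightly more streamlined. Like the paper, you also rely on the (asserted, not proved) fact that the first Robin eigenfunction of a Wulff shape is a function of $r=F^o(x)$, so I do not count that against you.

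The genuine gap is in the very first identification: you claim that $r_2=L_0^F/(2\kappa)$ ``is precisely the radius of $\widetilde{\mathcal{W}}_\Omega$.'' This is only true when $\Omega$ is simply connected. In the paper's setting $\partial\Omega=\Gamma_0\cup\dots\cup\Gamma_N$ and $L_0^F=P_F(\Omega_0)$ is the anisotropic perimeter of the \emph{outer} boundary only, so for a domain with holes one has $P_F(\Omega)>L_0^F$, and $\widetilde{\mathcal{W}}_\Omega$ is the Wulff shape of radius $r_3=P_F(\Omega)/(2\kappa)>r_2$. Your chain therefore only yields $\lambda_{1,F}(\alpha,\Omega)\leq\lambda_{1,F}(\alpha,\mathcal{W}_{r_2})$, and to conclude you still need $\lambda_{1,F}(\alpha,\mathcal{W}_{r_2})\leq\lambda_{1,F}(\alpha,\mathcal{W}_{r_3})$, i.e.\ the monotonicity of $R\mapsto\lambda_{1,F}(\alpha,\mathcal{W}_R)$ for $\alpha<0$. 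This is not automatic (for negative boundary parameter, domain inclusion does not trivially order eigenvalues) and is a separate ingredient, which the paper supplies in Remark \ref{monotonicity} by citing \cite{afk} and \cite{lt}. As written, your argument proves the theorem only for simply connected domains; the multiply connected case requires this additional monotonicity step, which your proposal never addresses.
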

The crucial step in order to prove this theorem is given by the following proposition.
\begin{prop}\label{ann<wulff_p}
	Let $\alpha<0$. For any $0<r_1<r_2$ we have
	$$ \mu(\alpha , A_{r_1,r_2})\leq\lambda_{1,F}( \alpha , \W_{r_2} ).$$ 
\end{prop}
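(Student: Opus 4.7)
My plan is a direct Rayleigh-quotient test-function argument: I transplant the first radial Robin eigenfunction on $\W_{r_2}$ into the variational formula for $\mu(\alpha,A_{r_1,r_2})$ and then use that $\lambda_{1,F}(\alpha,\W_{r_2})$ is nonpositive to absorb the extra inner-disk contribution.

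The first step is to obtain a one-dimensional variational characterization of both eigenvalues. For the annulus I already have \eqref{bound}: any $\psi\in W^{1,2}(r_1,r_2)$ yields, via $u(x)=\psi(F^o(x))$, an admissible test function and hence
$$
\mu(\alpha,A_{r_1,r_2})\le\frac{\int_{r_1}^{r_2}\psi'(r)^2\,r\,dr+\alpha\,r_2\,\psi(r_2)^2}{\int_{r_1}^{r_2}\psi(r)^2\,r\,dr}.
$$
The same coarea computation as in Step 1 of Theorem \ref{iso_vol}, now carried out on $\W_{r_2}$ and exploiting $F(\nabla_\xi F^o)=1$ together with $P_F(\W_r)=2\kappa r$, gives
$$
\lambda_{1,F}(\alpha,\W_{r_2})=\inf_{\phi\not\equiv 0}\frac{\int_0^{r_2}\phi'(r)^2\,r\,dr+\alpha\,r_2\,\phi(r_2)^2}{\int_0^{r_2}\phi(r)^2\,r\,dr},
$$
the infimum being attained by a positive function $\phi$ that solves the reduced Bessel-type ODE $-\phi''-\phi'/r=\lambda\phi$ on $(0,r_2)$ with $\phi'(r_2)+\alpha\phi(r_2)=0$; this reduction is obtained by substituting the ansatz $u(x)=\phi(F^o(x))$ into the Finsler eigenvalue equation and using $F^o(x)\nabla_\xi F(\nabla_\xi F^o(x))=x$.

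Granted this, I pick such a $\phi>0$, set $\lambda:=\lambda_{1,F}(\alpha,\W_{r_2})$, and plug $\psi:=\phi|_{[r_1,r_2]}$ into the annulus bound. From the identity
$$
\int_0^{r_2}\phi'^2\,r\,dr+\alpha\,r_2\,\phi(r_2)^2=\lambda\int_0^{r_2}\phi^2\,r\,dr,
$$
splitting every integral at $r_1$ shows that the numerator of the annulus quotient equals
$$
\lambda\int_{r_1}^{r_2}\phi^2\,r\,dr\;+\;\lambda\int_0^{r_1}\phi^2\,r\,dr\;-\;\int_0^{r_1}\phi'^2\,r\,dr.
$$
By \eqref{in.lam} applied to $\W_{r_2}$ one has $\lambda\le 2\alpha/r_2<0$, so the last two terms on the right are nonpositive. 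Dividing by the strictly positive quantity $\int_{r_1}^{r_2}\phi^2\,r\,dr$ gives $\mu(\alpha,A_{r_1,r_2})\le\lambda$, which is the claim.

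The only step I expect to require care is the first one, namely the rigorous reduction of the Wulff-shape Robin problem to the $1$D Bessel-type formulation. It is the anisotropic counterpart of the classical Euclidean fact, but the Finsler identities quoted above package all the computations needed; once that reduction is in place, what remains is a short bookkeeping exercise together with the sign observation $\lambda\le 0$.
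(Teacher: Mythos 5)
Your proposal is correct and is essentially the paper's own argument: both reduce $\lambda_{1,F}(\alpha,\W_{r_2})$ to the radial one-dimensional Robin problem, transplant (the restriction of) its positive eigenfunction $\phi$ as a test function in the annulus quotient, and dispose of the extra inner contribution using $\lambda_{1,F}(\alpha,\W_{r_2})\le 0$ from \eqref{in.lam}. The only difference is bookkeeping: you discard the remainder $\lambda\int_0^{r_1}\phi^2 r\,dr-\int_0^{r_1}\phi'^2 r\,dr$ by splitting the Rayleigh identity at $r_1$, whereas the paper writes the same quantity as the boundary term $-r_1\phi(r_1)\phi'(r_1)$ after integration by parts and shows it is nonpositive via the monotonicity of $r\mapsto r\phi(r)\phi'(r)$ — the two computations are identical in content.
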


\begin{proof}
	By symmetry, $\lambda_{1,F}( \alpha , \W_{r_2})$ is the smallest eigenvalue of the following one-dimensional problem 
	
	\begin{equation}\label{eq_dif}
	\begin{cases}
-r^{-(d-1)}&[r^{d-1}\phi'(r)]'=\lambda_{1,F}( \alpha , \W_{r_2} )\;\phi(r), r\in[0,r_2]\\
	&\phi'(0)=0 \\
	&\phi'(r_2)+\alpha \phi(r_2)=0.
	\end{cases}
	\end{equation}
	We can choose the associated function $\phi_1$ to be positive and normalised to $1$ and this eigenfunction can be used as a test function. Integrating by parts, we obtain
	\begin{equation}\label{mu.lam-} 
	\mu(\alpha , A_{r_1,r_2})\leq \lambda_{1,F}( \alpha , \W_{r_2} )-r_1\phi(r_1)\phi'(r_1).
	\end{equation}
	Since $\phi_1$ satisfies \eqref{eq_dif}, we have for all $r\in[0,r_2]$
	$$\left[r\phi_1(r)\phi_1'(r) \right]^\prime =-\lambda_{1,F}(\alpha , \W_{r_2})r\phi_1(r)^2+r\phi_1'(r)^2\geq0.$$
	and the inequality is due to \eqref{in.lam}. From the above inequality the function $g(r):=r\phi(r)\phi^{\prime}(r)$ is non-decreasing and using \eqref{mu.lam-}, we obtain the desired result.
\end{proof}

\begin{rem}\label{monotonicity}
	The following monotonicity result holds true. Let be $\mathcal{W}_R$ be a Wulff shape of radius $R$.  If $\alpha<0$, then 
	\begin{equation*}
	R\mapsto\lambda_{1,F}(\alpha,\mathcal{W}_R)
	\end{equation*}
	is strictly increasing. The above result is proven for the disks in \cite{afk} and for the annuli in \cite{lt}. 
\end{rem}
 
\begin{proof}[Proof of Theorem \ref{per}]
	
Firstly, we observe that the measure of $\mathcal{W}_{r_2}$ is greater than the measure of $A^F_{r_1,r_2}$ and the perimeter of $\mathcal{W}_{r_2}$, which is equal to $L_0$ is less than the perimeter of $A^F_{r_1,r_2}$. Using theorem \ref{annulus} and proposition \ref{ann<wulff_p} we obtain the thesis  for simply connected domains, i .e. when $L_0=P_F(\Omega)$. 

Concerning the general case, when there are multiple connected domains, thanks to remark \ref{monotonicity}, we have that 
\begin{equation*}
	\lambda_{1,F}( \alpha , \mathcal{W}_{r_2})\leq \lambda_{1,F}( \alpha , \mathcal{W}_{r_3}),
\end{equation*}
where $r_3=P_F(\Omega)/2\kappa$ for all $\alpha\leq 0$.
\end{proof}

\end{document}